\newenvironment{customthm}[1]
  {\innercustomthm}
  {\endinnercustomthm}
\newenvironment{customlem}[1]
  {\innercustomlem}
  {\endinnercustomlem}
\newenvironment{customcor}[1]
  {\innercustomcor}
  {\endinnercustomcor}
\newenvironment{custompro}[1]
  {\innercustompro}
  {\endinnercustompro}
\newtheorem*{thm*}{Theorem}
\newtheorem{thm}{Theorem}
\newtheorem{lem}[thm]{Lemma}
\newtheorem{pro}[thm]{Proposition}
\newtheorem{cor}[thm]{Corollary}
\newtheorem{conj}[thm]{Conjecture}
\newtheorem{ques}[thm]{Question}
\newcommand{\N}{\mathbb{N}}
\newcommand{\Aut}{\mathrm{Aut}}
\begin{document}

\title{Counting List Colorings of Unlabeled Graphs}

\author{Hemanshu Kaul~\thanks{Department of Applied Mathematics, Illinois Institute of Technology, Chicago, IL, USA (kaul@iit.edu)}
\and 
Jeffrey A. Mudrock~\thanks{Department of Mathematics and Statistics, University of South Alabama, Mobile, AL, USA (mudrock@southalabama.edu)}}

\maketitle

\begin{abstract}
The classic enumerative functions for counting colorings of a graph $G$, such as the chromatic polynomial $P(G,k)$, do so under the assumption that the given graph is labeled. In 1985, Hanlon defined and studied the chromatic polynomial for an unlabeled graph $\mathcal{G}$, $P(\mathcal{G}, k)$. Determining $P(\mathcal{G}, k)$ amounts to counting colorings under the action of automorphisms of $\mathcal{G}$.  
In this paper, we consider the problem of counting list colorings of unlabeled graphs.  We extend Hanlon's definition to the list context and define the unlabeled list color function, $P_\ell(\mathcal{G}, k)$, of an unlabeled graph $\mathcal{G}$. In this context, we pursue a fundamental question whose analogues have driven much of the research on counting list colorings and its generalizations: For a given unlabeled graph $\mathcal{G}$, does $P_\ell(\mathcal{G}, k)  =  P(\mathcal{G}, k)$ when $k$ is large enough?  We show the answer to this question is yes for a large class of unlabeled graphs that include point-determining graphs (also known as twin-free graphs, irreducible graphs, and mating graphs).

\medskip

\noindent {\bf Keywords.}  graph coloring, list coloring, chromatic polynomial, list color function, unlabeled graphs, point-determining graphs.

\noindent \textbf{Mathematics Subject Classification.} 20B25, 05C15, 05C30, 05A99.

\end{abstract}

\section{Introduction}\label{intro}

Counting colorings of graphs has a long history going back to Birkhoff~\cite{B12} and his introduction of the chromatic polynomial ($P(G,k)$, the number of proper $k$-colorings of a graph $G$) in 1912 with the hope of using it to make progress on the four color problem. Since then the chromatic polynomial and its generalizations have become central objects of study in Algebraic and Enumerative Combinatorics. Beyond classic  coloring, the notion of counting functions has been widely studied for list coloring, which was introduced independently by Vizing~\cite{V76} and Erd\H{o}s, Rubin, and Taylor~\cite{ET79} in the 1970s, and DP (or correspondence) coloring which was introduced by Dvo\v{r}\'{a}k and Postle~\cite{DP15} in 2015. 

Typically when classical colorings, list colorings, or DP colorings of a graph are counted it is assumed that the graph is labeled. 
In 1985, Hanlon defined and studied the chromatic polynomial for an unlabeled graph $\mathcal{G}$, $P(\mathcal{G}, k)$. Hanlon showed that determining $P(\mathcal{G}, k)$ amounts to counting colorings under the action of automorphisms of $\mathcal{G}$.  Hanlon's work has been generalized in a couple of directions. One being the ``orbital chromatic polynomial'' defined by considering any subgroup of Aut(G), that still considers classic graph coloring but generalizes the context of equivalence between such colorings (see~\cite{CK07}). In \cite{D19}, Hanlon's work is extended to counting colorings of unlabeled signed graphs. 

In this paper, we pursue a different perspective and consider the problem of counting list colorings of unlabeled graphs. Unlike previous works, the challenge here is that Burnside's Lemma/ Orbit Counting Lemma is of limited utility in this context. We give a natural definition of the unlabeled list color function, $P_\ell(\mathcal{G}, k)$, of an unlabeled graph $\mathcal{G}$ that counts its non-equivalent list colorings guaranteed over all $k$-list assignments. 

When Kostochka and Sidorenko introduced the list color function in 1990 ($P_\ell(G,k)$, the guaranteed number of list colorings of a labelled graph $G$ over all list assignments of $k$ colors), the first question they asked was whether $P_\ell(G,k)$ equals $P(G,k)$ when $k$ is large enough. This fundamental question has motivated much research on enumerative aspects of list coloring~\cite{DZ22} and DP-coloring~\cite{KM19}. Starting with Donner~\cite{D92} in 1992, and follow up papers \cite{DZ22, T09, WQ17}, it is now known that $P_\ell(G,k) = P(G,k)$  when $k\ge |E(G)|-1$. This phenomenon of the list color function of a graph equalling the corresponding chromatic polynomial when the number of colors is large enough has also been pursued recently in the context of counting list packings of a labeled graph $G$ (list packing asks for the existence of multiple pairwise disjoint list colorings of the graph)~\cite{KM24}. However, note that this phenomenon need not always be true for all graphs, for example, in the context of DP colorings of graphs and the associated DP color function vs the chromatic polynomial (see~\cite{KM19}).

Our primary objective in this paper is to study this phenomenon in context of list colorings of unlabeled graphs: Does $P_\ell(\mathcal{G}, k)  =  P(\mathcal{G}, k)$ for a given unlabeled graph $\mathcal{G}$ when $k$ is large enough? We show that this is true for a large class of graphs that include point-determining graphs. Point-determining graphs (also referred in literature as twin-free graphs, irreducible graphs, and mating graphs) have long been studied since being first considered by Sabidussi~\cite{S61} in 1961. We also show how to generate graphs that are not point-determining but satisfy this property.

\subsection{List Coloring and Counting List Colorings} \label{list}

In this paper all graphs are nonempty, finite, simple graphs.  Generally speaking, we follow West~\cite{W01} for terminology and notation.  The set of natural numbers is $\N = \{1,2,3, \ldots \}$.  For $m \in \N$, we write $[m]$ for the set $\{1, \ldots, m \}$.  We denote the automorphism group of a graph $G$ by $\Aut(G)$.  When $\pi$ is a permutation of a nonempty set $A$, we say $\pi = id$ when $\pi$ is the identity permutation on $A$.  Also, any permutation $\pi$ has a unique decomposition into disjoint cycles (up to the order of the cycles).  If we say $C_1 \ldots C_s$ is the cycle decomposition of $\pi$, then $C_1 \ldots C_s$ is a particular ordering of the unique decomposition of $\pi$ into disjoint cycles that includes all 1-cycles.  Strictly speaking $C_i$ is a function for each $i \in [s]$; however, when convenient in context, we will sometimes view $C_i$ as a subset of $A$ that consist of all the elements of $A$ appearing in the cycle $C_i$.  Additionally, we use $K_n$ for the complete graphs on $n$ vertices, and we use $P_n$ for the paths on $n$ vertices.  If $G$ and $H$ are vertex disjoint graphs, we write $G \vee H$ for the join of $G$ and $H$.

In the classical vertex coloring problem, we wish to color the vertices of a (labeled) graph $G$ with up to $k$ colors from $[k]$ so that adjacent vertices receive different colors, a so-called \emph{proper $k$-coloring}.  The \emph{chromatic number} of a graph, denoted $\chi(G)$, is the smallest $k$ such that $G$ has a proper $k$-coloring.  List coloring is a well-known variation on classical vertex coloring which was introduced independently by Vizing~\cite{V76} and Erd\H{o}s, Rubin, and Taylor~\cite{ET79} in the 1970s.  For list coloring, we associate a \emph{list assignment} $L$ with a graph $G$ such that each vertex $v \in V(G)$ is assigned a set of available colors $L(v)$.  We say $G$ is \emph{$L$-colorable} if there is a proper coloring $f$ of $G$ such that $f(v) \in L(v)$ for each $v \in V(G)$.  We refer to $f$ as a \emph{proper $L$-coloring} of $G$.  A list assignment $L$ for $G$ is called a \emph{$k$-assignment} if $|L(v)|=k$ for each $v \in V(G)$.  The \emph{list chromatic number} of a graph $G$, denoted $\chi_\ell(G)$, is the smallest $k$ such that $G$ is $L$-colorable whenever $L$ is a $k$-assignment for $G$.  It is immediately obvious that for any graph $G$, $\chi(G) \leq \chi_\ell(G)$.

In 1912 Birkhoff~\cite{B12} introduced the notion of the chromatic polynomial with the hope of using it to make progress on the four color problem.  For $k \in \N$, the \emph{chromatic polynomial} of a graph $G$, $P(G,k)$, is the number of proper $k$-colorings of $G$. It is well-known that $P(G,k)$ is a monic polynomial in $k$ of degree $|V(G)|$ (e.g., see~\cite{B94}).  For example, $P(K_n,k) = \prod_{i=0}^{n-1} (k-i)$, and for any graph $G$, $P(K_n \vee G,k) = \left(\prod_{i=0}^{n-1} (k-i) \right) P(G, k-n)$.  

The notion of chromatic polynomial was extended to list coloring in the early 1990s by Kostochka and Sidorenko~\cite{KS90}.  If $L$ is a list assignment for $G$, we use $P(G,L)$ to denote the number of proper $L$-colorings of $G$. The \emph{list color function} $P_\ell(G,k)$ is the minimum value of $P(G,L)$ where the minimum is taken over all possible $k$-assignments $L$ for $G$.  Since a $k$-assignment could assign the same $k$ colors to every vertex of $G$, $P_\ell(G,k) \leq P(G,k)$ for each $k \in \N$.  In general, the list color function can differ significantly from the chromatic polynomial for small values of $k$.  However, in 1992, answering a question of Kostochka and Sidorenko~\cite{KS90}, Donner~\cite{D92} showed that for any graph $G$ there is an $N \in \N$ such that $P_\ell(G,k) = P(G,k)$ whenever $k \geq N$. Dong and Zhang~\cite{DZ22} (improving upon results in~\cite{D92}, \cite{T09}, and~\cite{WQ17}) subsequently showed the following.
\begin{thm} [\cite{DZ22}] \label{thm: fenming}
Let $G $ be a simple graph with $n$ vertices and $m \geq 4$ edges. Then, for any $k$-assignment $L$ of $G$ with $k \ge m - 1$,
$$P(G, L) - P(G, k) \ge \left((k - m + 1)k^{n-3} + (k - m + 3)(c/3) k^{n-5}\right)
\sum_{uv\in E(G)}|L(u) - L(v)|,$$
where $c \ge (m-1)(m-3)/8$, and particularly, when $G$ is $K_3$-free, $c \ge {\binom{m-2}{2}} + 2\sqrt{m} - 3.$
\end{thm}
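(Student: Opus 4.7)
The plan is to bound $P(G,L) - P(G,k)$ from below by extracting ``excess'' proper $L$-colorings whose existence is witnessed by the mismatch sets $L(u)\setminus L(v)$ on edges. The key observation is that if a vertex $u$ is assigned a color $c \in L(u) \setminus L(v)$ for some neighbor $v$, then the edge $uv$ is automatically properly colored no matter what $v$ receives, so $u$'s choice decouples the endpoints. This suggests writing $P(G,L)$ in a Whitney / broken-circuit style expansion on the common universe $U = \bigcup_v L(v)$ and comparing term-by-term with the corresponding expansion of $P(G,k)$, so that the excess corrections are naturally indexed by edges (linear term) and by pairs of edges (quadratic term), with coefficients involving $|L(u)-L(v)|$.

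First, I would extract the linear correction. Fix an edge $uv$ and a color $c \in L(u)\setminus L(v)$; coloring $u$ with $c$ forces no constraint along $uv$, and then the remaining $n-2$ vertices of $G - \{u,v\}$ together with $v$ can be colored greedily. The hypothesis $k \ge m-1$ guarantees that under an ordering in which each vertex has at most $m-1$ previously colored neighbors, at least $k - m + 1$ colors are available at each step, while the remaining vertices contribute an additional $k^{n-3}$ factor. Summing over all edges $uv$ and all mismatched colors $c$, and subtracting the analogous quantity coming from $P(G,k)$ (which is zero since the constant $k$-assignment has no mismatches), produces the first summand $(k - m + 1)k^{n-3}\sum_{uv\in E(G)}|L(u)-L(v)|$.

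For the secondary $(k - m + 3)(c/3)k^{n-5}$ correction, I would analyze ordered pairs of ``witnessing'' edges: given one edge-mismatch $(uv, c)$, count the further colorings obtained by exploiting a second mismatched edge $u'v'$ with its own witnessing color. A case analysis based on whether the two edges share a vertex, form a cherry (path of length two), or close into a triangle provides the combinatorial input; averaging over ordered edge pairs and applying convexity to the degree sequence yields the coefficient $c \ge (m-1)(m-3)/8$. The $K_3$-free improvement $c \ge \binom{m-2}{2} + 2\sqrt{m} - 3$ comes from a Kővári--Sós--Turán / Reiman-type estimate on the number of cherries in a triangle-free graph, which sharpens the count because the triangle case is vacuous.

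The main obstacle I anticipate is controlling the sign of the residual terms in the inclusion--exclusion so that the claimed expression is a genuine lower bound, not one canceled by higher-order contributions; this requires grouping terms by spanning subgraph and verifying that the leftover sum is manifestly non-negative under $k \ge m-1$. A secondary obstacle is the bookkeeping to avoid double-counting when several incident edges simultaneously provide witnessing mismatches at the same vertex, which I would handle by first fixing an ordering of $E(G)$ and attributing each excess coloring to its earliest witnessing edge (and, for the quadratic term, to its earliest witnessing pair). The constraint $m \ge 4$ enters precisely to ensure the quadratic coefficient $(m-1)(m-3)/8$ is non-trivial.
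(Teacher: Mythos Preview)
This theorem is not proved in the paper at all: it is quoted verbatim from Dong and Zhang~\cite{DZ22} and used as a black box in the proof of Theorem~\ref{thm: usefenmingnminus2}. There is therefore no ``paper's own proof'' to compare your proposal against; the authors simply invoke the inequality $P(G,L)-P(G,k)\ge (k-m+1)k^{n-3}\sum_{uv\in E(G)}|L(u)-L(v)|$ as a given input.

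As for the merits of your sketch on its own terms: the heuristic that a color in $L(u)\setminus L(v)$ ``decouples'' the edge $uv$ is the right intuition, and the Dong--Zhang argument does proceed via a broken-circuit / Whitney-type expansion with a careful accounting of the excess contributions indexed by edges. However, what you have written is a plan rather than a proof. The step where you greedily extend to the remaining $n-2$ vertices and claim a factor of $(k-m+1)k^{n-3}$ is not justified: greedy coloring gives a product $\prod_i(k-d_i)$ for some back-degrees $d_i$, not $(k-m+1)k^{n-3}$, and you have not explained why the comparison with $P(G,k)$ leaves exactly this surplus. The quadratic term is even vaguer; you gesture at a case analysis on pairs of edges but do not carry it out, and the specific constants $(m-1)(m-3)/8$ and $\binom{m-2}{2}+2\sqrt{m}-3$ are asserted rather than derived. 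Finally, the ``main obstacle'' you identify --- controlling signs in inclusion--exclusion so the residual is non-negative --- is indeed the heart of the matter, and nothing in your outline addresses it. If you want to actually prove this result you will need to consult~\cite{DZ22} directly.
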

Our main motivation for this paper is to prove an analogue of the following result in the context of unlabeled graphs:  For any graph $G$, $P_{\ell}(G,k)=P(G,k)$ whenever $k$ is a positive integer satisfying $k \geq |E(G)|-1$.~\footnote{This follows from Theorem~\ref{thm: fenming} for graphs with at least four edges and is easily checked for graphs with smaller size.}    

\subsection{Coloring Unlabeled Graphs}\label{unlabeled}

In 1985, Hanlon~\cite{H85} defined and studied an extension of the chromatic polynomial to unlabeled graphs. 

Suppose $\mathcal{G}$ is an equivalence class of graphs under the isomorphism relation and each element in $\mathcal{G}$ has vertex set $\{v_1, \ldots, v_n \}$.  We refer to $\mathcal{G}$ as an \emph{unlabeled graph of order $n$}.  Let $S_n$ be the symmetric group on $\{v_1, \ldots, v_n \}$.  For $\pi \in S_n$ and $G \in \mathcal{G}$, $\pi G$ is the element of $\mathcal{G}$ with edge set $\{ \pi(v_i)\pi(v_j) : v_iv_j \in E(G) \}$.  Also, note that $\pi$ is an \emph{automorphism} of $G$ if and only if $\pi G = G$.  Let 
$$u(\mathcal{G},k) = \{ (G, f) : \text{$G \in \mathcal{G}$ and $f$ is a proper $k$-coloring of $G$} \}.$$
Notice that $A: S_n \times u(\mathcal{G},k) \rightarrow u(\mathcal{G},k)$ given by $A(\pi, (G,f)) = (\pi G, f \pi^{-1})$ is a group action of $S_n$ on $u(\mathcal{G},k)$.  A \emph{proper $k$-coloring of $\mathcal{G}$} is an orbit of $S_n$ acting on $u(\mathcal{G},k)$.  We let $P(\mathcal{G}, k)$ denote the number of proper $k$-colorings of $\mathcal{G}$ for each $k \in \N$.

Now, suppose that $G \in \mathcal{G}$ and $\pi \in S_n$ is an automorphism of $G$.  A \emph{proper $(\pi,k)$-coloring} is a proper $k$-coloring, $f$, of $G$ with the property that $f(\pi(v)) = f(v)$ for each $v \in V(G)$.  Let $P(G, \pi, k)$ be the number of proper $(\pi,k)$-colorings of $G$.  Hanlon showed the following.
\begin{thm}[\cite{H85}] \label{thm: Hanlon}
    For any $k \in \N$ and $G \in \mathcal{G}$, $$P(\mathcal{G},k) = \frac{1}{|\Aut(G)|} \sum_{\pi \in \Aut(G)} P(G, \pi, k).$$
\end{thm}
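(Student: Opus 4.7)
The plan is to apply Burnside's Lemma (the Orbit Counting Lemma) to a suitably chosen group action. Rather than working directly with the action of the large group $S_n$ on $u(\mathcal{G},k)$, I would first reduce the problem to an action of the smaller group $\Aut(G)$ on the set $C(G,k)$ of proper $k$-colorings of a single fixed labeled representative $G \in \mathcal{G}$. This reduction is the conceptual heart of the argument; once in place, Burnside does the bookkeeping.

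The first step is to fix any $G \in \mathcal{G}$ and observe that every orbit of the action $A$ on $u(\mathcal{G},k)$ contains at least one pair of the form $(G, f)$. This is because $S_n$ acts transitively on $\mathcal{G}$ under $(\pi, H) \mapsto \pi H$, as $\mathcal{G}$ is a single isomorphism class. Next, I would verify the central claim: two pairs $(G, f_1)$ and $(G, f_2)$ lie in the same $S_n$-orbit under $A$ if and only if there exists $\pi \in \Aut(G)$ with $f_2 = f_1 \pi^{-1}$. Unpacking the definition, $A(\pi,(G,f_1)) = (\pi G, f_1\pi^{-1})$ equals $(G,f_2)$ exactly when $\pi G = G$ (so $\pi \in \Aut(G)$) and $f_2 = f_1 \pi^{-1}$. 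This establishes a bijection between proper $k$-colorings of $\mathcal{G}$ and orbits of the action of $\Aut(G)$ on $C(G,k)$ given by $\pi \cdot f = f\pi^{-1}$. One should briefly check that this is a well-defined left action of $\Aut(G)$ on $C(G,k)$ (the image $f\pi^{-1}$ is proper because $\pi$ is an automorphism, and associativity follows from a direct computation).

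Having reduced the problem, I would apply Burnside's Lemma to the action of $\Aut(G)$ on $C(G,k)$ to obtain
\[
P(\mathcal{G}, k) \;=\; \frac{1}{|\Aut(G)|} \sum_{\pi \in \Aut(G)} \bigl|\{\,f \in C(G,k) : f\pi^{-1} = f\,\}\bigr|.
\]
The final step is to identify the fixed-point set: the condition $f\pi^{-1} = f$, after substituting $v \mapsto \pi(u)$, is equivalent to $f(\pi(u)) = f(u)$ for all $u \in V(G)$, which is precisely the definition of a proper $(\pi,k)$-coloring. Hence the inner cardinality equals $P(G,\pi,k)$, giving the claimed identity.

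The main obstacle, and the only step that requires real care, is the reduction in the second paragraph: one must confirm that quotienting the $S_n$-action to an $\Aut(G)$-action on a single fiber neither loses nor double-counts orbits. Everything else is a routine application of Burnside's Lemma and definition chasing.
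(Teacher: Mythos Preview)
Your proposal is correct. The paper does not actually supply its own proof of Theorem~\ref{thm: Hanlon}; it is quoted from Hanlon~\cite{H85}, with only the remark that ``one of the keys to the proof of Theorem~\ref{thm: Hanlon} is Burnside's Lemma.'' Your argument is exactly in that spirit: reduce the $S_n$-action on $u(\mathcal{G},k)$ to the $\Aut(G)$-action on the proper $k$-colorings of a fixed representative $G$ (using transitivity of $S_n$ on $\mathcal{G}$ and the stabilizer identification $\pi G = G \iff \pi \in \Aut(G)$), then apply Burnside and identify the fixed-point count with $P(G,\pi,k)$. The paper in fact carries out the Burnside portion of this very computation in Section~\ref{unlabeled}'s sequel when verifying $u_\ell(G,C)=P(\mathcal{G},k)$ for the constant assignment $C$, so your approach aligns with both the cited source and the paper's own usage.
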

It is worth noting that one of the keys to the proof of Theorem~\ref{thm: Hanlon} is Burnside's Lemma.  We now mention an important fact about $P(G, \pi, k)$ for $\pi \in \Aut(G)$.  Suppose that $\pi \in \Aut(G)$ and $C_1 \ldots C_s$ is the cycle decomposition of $\pi$.  The \emph{quotient of $G$ with respect to $\pi$}, denoted $G : \pi$, is the graph with vertex set $\{C_1, \ldots, C_s\}$ and edges created so that $C_iC_j \in E(G : \pi)$ if and only if there is a $u \in C_i$ and $v \in C_j$ such that $uv \in E(G)$.

\begin{lem} [\cite{H85}] \label{lem: reduce}
Suppose $G$ is a graph with $V(G) = \{v_1, \ldots, v_n \}$ and $k \in \N$.  Also, suppose that $\pi \in \Aut(G)$ and $C_1 \ldots C_s$ is the cycle decomposition of $\pi$.  Then the following statements hold.
\\
(i)  If there is an $i \in [s]$ such that $C_i$ contains two adjacent vertices in $G$, then $P(G, \pi, k) = 0$.
\\
(ii)  Otherwise, $P(G, \pi, k) = P(G : \pi, k)$. 
\end{lem}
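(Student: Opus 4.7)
The plan is to exploit the defining property of a proper $(\pi,k)$-coloring, namely that $f(\pi(v)) = f(v)$ for every $v \in V(G)$, which (by iterating $\pi$) forces $f$ to be \emph{constant on each cycle} $C_i$ of $\pi$. This single observation drives both parts of the lemma.

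For part (i), I would argue directly: suppose some cycle $C_i$ contains two vertices $u, v$ that are adjacent in $G$. Any proper $(\pi,k)$-coloring $f$ must satisfy $f(u) = f(v)$ (since $f$ is constant on $C_i$), but properness demands $f(u) \neq f(v)$. So no such $f$ exists, hence $P(G, \pi, k) = 0$.

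For part (ii), the strategy is to construct an explicit bijection $\Phi$ between the set of proper $(\pi,k)$-colorings of $G$ and the set of proper $k$-colorings of the quotient $G : \pi$. Given a proper $(\pi,k)$-coloring $f$ of $G$, define $\Phi(f)$ on the vertex set $\{C_1, \ldots, C_s\}$ of $G:\pi$ by $\Phi(f)(C_i) = f(v)$ for any $v \in C_i$; this is well-defined because $f$ is constant on cycles. To check $\Phi(f)$ is proper on $G : \pi$, suppose $C_iC_j \in E(G:\pi)$, so that by definition there exist $u \in C_i$ and $v \in C_j$ with $uv \in E(G)$; since $f$ is proper, $f(u) \neq f(v)$, whence $\Phi(f)(C_i) \neq \Phi(f)(C_j)$. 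Conversely, given any proper $k$-coloring $g$ of $G:\pi$, define $\Psi(g)$ on $V(G)$ by $\Psi(g)(v) = g(C_i)$ where $v \in C_i$. Then $\Psi(g)(\pi(v)) = \Psi(g)(v)$ automatically (since $\pi(v)$ lies in the same cycle as $v$), and $\Psi(g)$ is proper on $G$ because if $uv \in E(G)$ with $u \in C_i$ and $v \in C_j$, then the hypothesis of (ii) guarantees $C_i \neq C_j$, so $C_iC_j \in E(G:\pi)$, giving $g(C_i) \neq g(C_j)$. Thus $\Phi$ and $\Psi$ are mutually inverse, yielding $P(G, \pi, k) = P(G : \pi, k)$.

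I do not anticipate a genuine obstacle; the only subtlety worth highlighting is the implicit use of $f \circ \pi = f \Rightarrow f$ is constant on cycles, which requires iterating $\pi$ around the cycle, and the fact that part (ii) uses the ``no adjacent vertices within a cycle'' hypothesis precisely to ensure the quotient construction never produces a loop that would force $g(C_i) \neq g(C_i)$.
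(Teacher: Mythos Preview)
Your proposal is correct. The paper does not give its own proof of Lemma~\ref{lem: reduce} (it is cited from Hanlon~\cite{H85}), but the paper's proof of the list-coloring generalization, Lemma~\ref{lem: reduce-list}, follows exactly the same bijection argument you outline: observe that a proper $(\pi,L)$-coloring must be constant on each cycle (giving part (i) immediately), and then construct the map $M(f)(C_j) = f(v)$ for any $v \in C_j$ and verify it is a bijection.
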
 
Since the identity permutation is in $\Aut(G)$, Theorem~\ref{thm: Hanlon} and Lemma~\ref{lem: reduce} imply that $P(\mathcal{G},k)$ is a polynomial in $k$ of degree $n$ with leading coefficient $1/ |\Aut(G)|$.  With this in mind, we refer to $P(\mathcal{G},k)$ as the \emph{unlabeled chromatic polynomial} of $\mathcal{G}$.

\subsection{List Coloring Unlabeled Graphs}

Suppose that $G \in \mathcal{G}$ and $L$ is a $k$-assignment for $G$.  Suppose $U(G,L)$ is the set of proper $L$-colorings of $G$.  We define an equivalence relation $\sim$ on $U(G,L)$ as follows.  For $f, g \in U(G,L)$, suppose $f \sim g$ if there is a $\pi \in \Aut(G)$ such that $f \pi = g$.  We refer to $f$ and $g$ as \emph{equivalent proper $L$-colorings of $G$}.  Let $u_{\ell}(G,L)$ be the number of equivalence classes of $\sim$.

We let $P_{\ell}(\mathcal{G},k)$ be the minimum value of $u_{\ell}(G,L)$ where $G$ is an arbitrarily chosen member of $\mathcal{G}$ and the minimum is taken over all possible $k$-assignments, $L$, for $G$.  We refer to $P_{\ell}(\mathcal{G},k)$ as the \emph{unlabeled list color function} of $\mathcal{G}$.  

We now show the unlabeled list color function of $\mathcal{G}$ is a natural extension of the unlabeled chromatic polynomial of $\mathcal{G}$ to the list context.  Suppose that $C$ is the $k$-assignment for $G$ that assigns $[k]$ to each vertex of $G$.  Notice that $A: \Aut(G) \times U(G,C) \rightarrow U(G,C)$ given by $A(\pi, f) = f \pi^{-1}$ is a group action of $\Aut(G)$ on $U(G,C)$.  Now, suppose for each $\pi \in \Aut(G)$ we let $\psi(\pi) = |\{f \in u(G,C) : f \pi^{-1} = f \}|$.  Burnside's Lemma gives a formula for counting the number of orbits of our group action, $u_{\ell}(G,C)$, in terms of $\psi$~(see~\cite{W20}). Specifically,
$$u_{\ell}(G,C) = \frac{1}{|\Aut(G)|} \sum_{\pi \in \Aut(G)} \psi(\pi) = \frac{1}{|\Aut(G)|} \sum_{\pi \in \Aut(G)} P(G, \pi, k) = P(\mathcal{G},k).$$
So, we clearly have $P_{\ell}(\mathcal{G},k) \leq P(\mathcal{G},k)$ for each $k \in \N$.  With this in mind, the following question is now a natural extension of the original question of Kostochka and Sidorenko~\cite{KS90} for labeled graphs.
\begin{ques} \label{ques: fundamental2}
For which unlabeled graphs $\mathcal{G}$ does there exist an $N \in \N$ so that $P_{\ell}(\mathcal{G},k) = P(\mathcal{G},k)$ whenever $k \geq N$?
\end{ques}
In this paper we make progress on Question~\ref{ques: fundamental2}; however, in general it remains open.  In fact, it is possible that all unlabeled graphs have the property described in Question~\ref{ques: fundamental2}. 

\subsection{Outline of the Paper}\label{outline}

We now present an outline of the paper.  In Section~\ref{prelim} we make some fundamental observations that are used in the remainder of the paper.  In particular, we partially extend Theorem~\ref{thm: Hanlon} to the list context.  Suppose $G$ is a graph, $L$ is a $k$-assignment for $G$, and $\pi \in \Aut(G)$.  If $P(G, \pi, L)$ is the number of proper $L$-colorings $f$ of $G$ with the property that $f(\pi(v)) = f(v)$ for each $v \in V(G)$, then we have the following result. 

\begin{lem} \label{lem: ineq}
Suppose $L$ is a $k$-assignment for $G \in \mathcal{G}$. Then, 
$$u_{\ell}(G,L) \ge \frac{1}{|\Aut(G)|} \sum\limits_{\pi \in \Aut(G)} P(G, \pi, L).$$
\end{lem}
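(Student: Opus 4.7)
The natural plan is a double-counting argument on pairs $(f,\pi) \in U(G,L) \times \Aut(G)$ satisfying $f\pi = f$, mimicking the usual proof of Burnside's lemma but working with an inequality because the map $f \mapsto f\pi$ need not preserve $U(G,L)$ when $L$ is not constant.

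For $\pi \in \Aut(G)$, let $\mathrm{Fix}(\pi) = \{f \in U(G,L) : f\pi = f\}$. By the definition of $P(G,\pi,L)$ given just before the lemma, $|\mathrm{Fix}(\pi)| = P(G,\pi,L)$, since the condition $f(\pi(v))=f(v)$ for all $v \in V(G)$ is exactly $f\pi = f$. For $f \in U(G,L)$, let $\mathrm{Stab}(f) = \{\pi \in \Aut(G) : f\pi = f\}$, a subgroup of $\Aut(G)$. Counting the pairs two ways gives
$$\sum_{\pi \in \Aut(G)} P(G,\pi,L) \;=\; \sum_{f \in U(G,L)} |\mathrm{Stab}(f)|.$$

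The next step is to regroup the right-hand sum by equivalence classes of $\sim$. Fix an equivalence class $[f_0] \subseteq U(G,L)$. For each $g \in [f_0]$, write $g = f_0 \sigma$ for some $\sigma \in \Aut(G)$; a direct check shows $\mathrm{Stab}(g) = \sigma^{-1}\mathrm{Stab}(f_0)\sigma$, so $|\mathrm{Stab}(g)| = |\mathrm{Stab}(f_0)|$. Moreover, the map $\Aut(G) \to \{f_0 \pi : \pi \in \Aut(G)\}$, $\pi \mapsto f_0\pi$, has fibers of size $|\mathrm{Stab}(f_0)|$, so the full orbit of $f_0$ under $\Aut(G)$ (on all colorings, not just proper $L$-colorings) has cardinality $|\Aut(G)|/|\mathrm{Stab}(f_0)|$. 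Since $[f_0]$ is contained in this orbit,
$$|[f_0]| \;\le\; \frac{|\Aut(G)|}{|\mathrm{Stab}(f_0)|}, \qquad \text{hence} \qquad \sum_{g \in [f_0]} |\mathrm{Stab}(g)| \;=\; |[f_0]|\cdot|\mathrm{Stab}(f_0)| \;\le\; |\Aut(G)|.$$

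Summing over the $u_\ell(G,L)$ equivalence classes yields
$$\sum_{\pi \in \Aut(G)} P(G,\pi,L) \;=\; \sum_{[f_0]} \sum_{g \in [f_0]} |\mathrm{Stab}(g)| \;\le\; u_\ell(G,L)\cdot|\Aut(G)|,$$
which rearranges to the desired inequality.

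The only subtle point, and where the argument departs from Hanlon's setting, is precisely why an inequality rather than equality appears: when $L$ is not constant, the assignment $f \mapsto f\pi$ can send a proper $L$-coloring to a coloring that violates $L$, so $[f_0]$ is only a subset of the $\Aut(G)$-orbit of $f_0$, giving $|[f_0]| \le |\Aut(G)|/|\mathrm{Stab}(f_0)|$ instead of equality. This is the inequality that gets inherited into the final bound, and apart from it the computation is the standard Burnside double-counting. Verifying that $\sim$ is an equivalence relation (which is needed for $u_\ell(G,L)$ to be well-defined and for the grouping step) is immediate from $\Aut(G)$ being a group, so nothing else stands in the way.
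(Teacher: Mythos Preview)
Your proof is correct and takes essentially the same approach as the paper: both double-count pairs $(f,\pi)$ with $f\pi=f$, regroup the coloring sum by equivalence classes, and observe that each class contributes at most $|\Aut(G)|$ to the total. The only cosmetic difference is that you phrase the key inequality via orbit--stabilizer ($|[f_0]|\cdot|\mathrm{Stab}(f_0)|\le|\Aut(G)|$), whereas the paper phrases the equivalent fact as the disjointness within $\Aut(G)$ of the sets $\{\pi : g\pi = f_E\}$ as $g$ ranges over a class, together with Proposition~\ref{pro: doug} to equate their sizes with $|\mathrm{Stab}(g)|$.
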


The inequality in Lemma~\ref{lem: ineq} can not be replaced with equality for all graphs $G$.  Indeed, if $G$ is an edgeless graph on $n$ vertices with $n \geq 2$ and $L$ is a $k$-assignment for $G$ such that $L(u) \cap L(v) = \emptyset$ whenever $u, v \in V(G)$ and $u \neq v$, then $u_{\ell}(G,L) = k^n$, but $\frac{1}{|\Aut(G)|} \sum\limits_{\pi \in \Aut(G)} P(G, \pi, L) = k^n/n!$. We end Section~\ref{prelim} by considering unlabeled list color functions of disconnected, unlabeled graphs.  We prove the following.

\begin{pro} \label{pro: disjoint}
    Suppose $G_1$ and $G_2$ are connected graphs that are non-isomorphic and vertex disjoint. Let $G$ be the disjoint union of $G_1$ and $G_2$. Suppose $\mathcal{G}$, $\mathcal{G}_1$, and $\mathcal{G}_2$ are the unlabeled graphs corresponding to $G$, $G_1$ and $G_2$, respectively.  Then, $P_{\ell}(\mathcal{G},k) = P_{\ell}(\mathcal{G}_1,k)\;P_{\ell}(\mathcal{G}_2,k)$.
\end{pro}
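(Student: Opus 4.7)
The plan is to show that on any $k$-assignment for $G$ the count $u_\ell(G,L)$ factors as a product of the corresponding counts on $G_1$ and $G_2$, and then optimize each factor separately. The key structural input is that since $G_1$ and $G_2$ are non-isomorphic and connected, every automorphism of $G$ must send $V(G_1)$ to $V(G_1)$ and $V(G_2)$ to $V(G_2)$. This identifies $\Aut(G)$ with $\Aut(G_1)\times\Aut(G_2)$ acting componentwise.

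First I would fix an arbitrary $k$-assignment $L$ for $G$, and write $L_i$ for its restriction to $V(G_i)$. Since there are no edges between the two vertex classes, the restriction map $f \mapsto (f|_{V(G_1)}, f|_{V(G_2)})$ is a bijection from $U(G,L)$ onto $U(G_1,L_1)\times U(G_2,L_2)$. Using the identification $\Aut(G)=\Aut(G_1)\times\Aut(G_2)$, two proper $L$-colorings $f=(f_1,f_2)$ and $g=(g_1,g_2)$ satisfy $f\sim g$ if and only if there exist $\pi_i\in\Aut(G_i)$ with $f_i\pi_i=g_i$ for $i=1,2$; that is, $f\sim g$ if and only if $f_i\sim g_i$ in $U(G_i,L_i)$ for each $i$. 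Consequently
$$u_\ell(G,L) \;=\; u_\ell(G_1,L_1)\cdot u_\ell(G_2,L_2).$$

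For the upper bound on $P_\ell(\mathcal{G},k)$, I would pick $k$-assignments $L_1^\star$ and $L_2^\star$ achieving $u_\ell(G_i,L_i^\star)=P_\ell(\mathcal{G}_i,k)$, and glue them into a $k$-assignment $L^\star$ on $G$ by taking $L_i^\star$ on each $V(G_i)$; the factorization above then gives $u_\ell(G,L^\star)=P_\ell(\mathcal{G}_1,k)P_\ell(\mathcal{G}_2,k)$, so $P_\ell(\mathcal{G},k)\le P_\ell(\mathcal{G}_1,k)P_\ell(\mathcal{G}_2,k)$. For the matching lower bound, the factorization applied to any $k$-assignment $L$ for $G$ gives $u_\ell(G,L)\ge P_\ell(\mathcal{G}_1,k)P_\ell(\mathcal{G}_2,k)$, so taking the minimum over $L$ yields the reverse inequality.

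The one step that requires a little care is the automorphism identification: I would argue that any $\pi\in\Aut(G)$ permutes the connected components of $G$, and because the only components are $G_1$ and $G_2$ which are non-isomorphic, $\pi$ must fix each component setwise; then $\pi|_{V(G_i)}\in\Aut(G_i)$, and conversely any pair $(\pi_1,\pi_2)\in\Aut(G_1)\times\Aut(G_2)$ extends to an element of $\Aut(G)$. This is also the only place where the hypotheses ``connected'' and ``non-isomorphic'' are used; without them one would pick up extra automorphisms that swap isomorphic components, and the clean product decomposition of the equivalence classes would fail.
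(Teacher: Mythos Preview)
Your proposal is correct and follows essentially the same approach as the paper: both identify $\Aut(G)$ with $\Aut(G_1)\times\Aut(G_2)$ via the non-isomorphism of the connected components, deduce the factorization $u_\ell(G,L)=u_\ell(G_1,L_1)\,u_\ell(G_2,L_2)$ for every $k$-assignment, and then obtain the two inequalities by optimizing each factor. The paper carries out the factorization more explicitly by building a bijection between $\mathcal{E}_1\times\mathcal{E}_2$ and $\mathcal{E}$, whereas you argue directly that the equivalence relation on $U(G,L)$ is the product of the two componentwise relations, but this is the same idea.
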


Importantly, notice that Proposition~\ref{pro: disjoint} says nothing about unlabeled disconnected graphs that have two distinct, isomorphic, connected components.  In fact, the formulas at the end of Proposition~\ref{pro: disjoint} do not hold in this scenario.  For example, one can use Theorem~\ref{thm: Hanlon} to show that if $\overline{K}_n$ is an unlabeled, edgeless $n$-vertex graph with $n \geq 2$, then for each $k \in \N$, $$P(\overline{K}_n,k) = \frac{1}{n!} \prod_{i=0}^{n-1} (k+i) \neq k^n = P(\overline{K}_1,k)^n.$$
This observation along with Theorem~\ref{thm: transposition} below implies that there is a $k \in \N$ such that $P_{\ell}(\overline{K}_2,k) \neq P_{\ell}(\overline{K}_1,k)^2 $.  Our main focus in Section~\ref{proof} is connected graphs, but studying the unlabeled list color functions of disconnected graphs with at least two isomorphic connected components is a potential topic for future research.  In fact, to be provocative, we conjecture the following.
\begin{conj} \label{conj: provoke}
Let $\overline{K}_n$ be an unlabeled, edgeless $n$-vertex graph. For each $n \in \N$ there is an $N \in \N$ such that $P_{\ell}(\overline{K}_n,k) = P(\overline{K}_n,k)$ whenever $k \geq N$.
\end{conj}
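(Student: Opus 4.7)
My plan is to exploit the simple combinatorial structure of colorings of edgeless graphs and argue that uniform list assignments minimize $u_\ell(\overline{K}_n, L)$. Since $\overline{K}_n$ has no edges and $\Aut(\overline{K}_n) = S_n$, a proper $L$-coloring of a representative $G \in \overline{K}_n$ is simply a tuple $(c_1,\ldots,c_n) \in L(v_1)\times\cdots\times L(v_n)$, and two such tuples are $\sim$-equivalent iff one is obtained from the other by reordering coordinates via some $\pi \in S_n$. Because the $S_n$-orbit of a tuple is determined by its underlying multiset, $u_\ell(\overline{K}_n, L)$ equals the number of size-$n$ multisets $M$ from $\bigcup_i L(v_i)$ that admit at least one ordering $(c_1,\ldots,c_n)$ with $c_i \in L(v_i)$ for all $i$. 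By Hall's theorem, such an ordering exists iff there is a perfect matching in the bipartite graph whose parts are $\{v_1,\ldots,v_n\}$ and the elements of $M$ (counted with multiplicity), where $v_i$ is joined to $m$ iff $m \in L(v_i)$.

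With this description in hand, one direction is immediate. When $L$ is the uniform $k$-assignment $L(v_i) = [k]$ for all $i$, every size-$n$ multiset from $[k]$ is realizable, so $u_\ell(\overline{K}_n, L) = \binom{k+n-1}{n} = P(\overline{K}_n, k)$, giving $P_\ell(\overline{K}_n, k) \le P(\overline{K}_n, k)$. The conjecture therefore reduces to the matching lower bound: for $k$ large enough in terms of $n$, every $k$-assignment $L$ realizes at least $\binom{k+n-1}{n}$ multisets.

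My preferred route to this lower bound is a compression argument. Given a $k$-assignment $L$ with $L(v_i) \ne L(v_j)$ for some $i, j$, choose $a \in L(v_i) \setminus L(v_j)$ and $x \in L(v_j) \setminus L(v_i)$ appropriately and define $L'$ by replacing $x$ with $a$ in $L(v_j)$, leaving all other lists unchanged; this strictly enlarges $|L(v_i) \cap L(v_j)|$ by one. The crux is to show that some choice of $(a,x)$ satisfies $u_\ell(\overline{K}_n, L') \le u_\ell(\overline{K}_n, L)$, so that iterating terminates at the uniform assignment without increasing the realizable-multiset count. As a sanity check, $n = 2$ already exhibits the right behavior: a direct calculation gives $u_\ell(\overline{K}_2, L) - P(\overline{K}_2, k) = (k - t)(k + t - 1)/2$ with $t = |L(v_1) \cap L(v_2)|$, which is nonnegative and vanishes precisely when $L(v_1) = L(v_2)$.

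The main obstacle is establishing the compression step for $n \ge 3$. The swap trades realizable multisets containing $x$ (which just lost its slot in $L(v_j)$) for newly realizable multisets built from $a$ through $L(v_j)$, and balancing these contributions in general requires a careful analysis of Hall's condition under a single-entry change, especially for multisets whose realizations depend on subtle interactions between several lists at once. Should a direct compression prove elusive, a weaker but still sufficient fallback is to establish only the asymptotic inequality by fixing a $k$-subset $T \subseteq \bigcup_i L(v_i)$ containing $\bigcap_i L(v_i)$ and injecting the $\binom{k+n-1}{n}$ multisets from $T$ into realizable multisets of $L$ via an absorbing construction that exploits the slack in Hall's condition when $k$ is large relative to $n$; this should yield the threshold $N = N(n)$ predicted by the conjecture even without full monotonicity.
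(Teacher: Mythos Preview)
The statement you are attempting to prove is stated in the paper as a \emph{conjecture}, not a theorem; the paper establishes it only for $n\in\{1,2\}$ (via Corollary~\ref{cor: pointdetermine} for $n=1$ and Theorem~\ref{thm: transposition} with the empty base graph for $n=2$) and explicitly records the cases $n\ge 3$ as open. So there is no ``paper's own proof'' to compare against for general $n$.

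Your combinatorial reformulation is correct and genuinely useful. For the edgeless graph with $\Aut(G)=S_n$, two proper $L$-colorings $f,g\in U(G,L)$ are equivalent if and only if they have the same underlying multiset, so $u_\ell(\overline K_n,L)$ is exactly the number of size-$n$ multisets from $\bigcup_i L(v_i)$ that admit a system of distinct representatives in the Hall sense. Your direct $n=2$ computation $u_\ell(\overline K_2,L)-P(\overline K_2,k)=(k-t)(k+t-1)/2$ with $t=|L(v_1)\cap L(v_2)|$ is correct and gives a cleaner, more elementary proof of that case than the paper's route through Theorem~\ref{thm: transposition}.

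However, your proposal does not constitute a proof for $n\ge 3$. You yourself flag the compression step as ``the main obstacle'' and do not carry it out: you assert only that \emph{some} swap $(a,x)$ should not increase the realizable-multiset count, but give no argument for why such a swap must exist. The interaction between a single list modification and Hall's condition across all multisets is exactly the delicate point, and nothing in the proposal controls the multisets that become unrealizable versus those that become newly realizable. The fallback injection is likewise only a heuristic: you describe an ``absorbing construction that exploits the slack in Hall's condition'' without specifying the injection or verifying it is well-defined and injective. As written, both routes are plans rather than proofs, and the conjecture remains open for $n\ge 3$ after your proposal just as it does in the paper.
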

Corollary~\ref{cor: pointdetermine} and Theorem~\ref{thm: transposition} imply the truth of Conjecture~\ref{conj: provoke} when $n \in [2]$.  It is open for $n \geq 3$.

In Section~\ref{proof} we show that all unlabeled point-determining graphs have the property described in Question~\ref{ques: fundamental2}.  A graph $G$ is called \emph{point-determining} if no two distinct vertices in $G$ have the same neighborhood in $G$.  Point-determining graphs (also referred in literature as twin-free graphs, irreducible graphs, and mating graphs) have long been studied in the context of various graph colorings, graph homomorphisms, graph domination, and mating systems (see e.g.,~\cite{GL11, S73}) since being first considered by Sabidussi~\cite{S61} in 1961. Unlabeled point-determining (connected) graphs have been enumerated, see~\cite{GL11, K07, R89} and sequences \emph{A004110} and \emph{A004108} in~\cite{S}. Note that asymmetric graphs are point-determining, and it is known that almost all graphs are asymmetric (see~\cite{ER63}).

We prove the following theorem using Theorem~\ref{thm: fenming} and Lemma~\ref{lem: ineq}.

\begin{thm} \label{thm: usefenmingnminus2}
Suppose $\mathcal{G}$ is an unlabeled, connected graph of order $n$ and size $m \geq 4$ with $G \in \mathcal{G}$.  Suppose for each $\pi \in \Aut(G) - \{id \}$ with cycle decomposition $C_1 \ldots C_s$ we have: If for each $i \in [s]$ the vertices in $C_i$ are pairwise nonadjacent in $G$, $s \leq n-2$.  Then, there exists an $N \in \N$ such that $P_{\ell}(\mathcal{G},k) = P(\mathcal{G},k)$ whenever $k \geq N$.
\end{thm}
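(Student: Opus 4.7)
The plan is to use Lemma~\ref{lem: ineq} on $u_\ell(G, L)$ and Theorem~\ref{thm: Hanlon} on $P(\mathcal{G}, k)$, so that since $P_\ell(\mathcal{G}, k) \leq P(\mathcal{G}, k)$ is already known, it suffices to exhibit an $N$ such that
\[
\sum_{\pi \in \Aut(G)} P(G, \pi, L) \;\geq\; \sum_{\pi \in \Aut(G)} P(G, \pi, k)
\]
for every $k$-assignment $L$ of $G$ and every $k \geq N$. First I would split $\Aut(G)$ into the identity, the non-identity permutations whose cycle decomposition contains a cycle with two $G$-adjacent vertices (call these \emph{bad}), and the remaining non-identity permutations whose cycles are all $G$-independent (call these \emph{good}). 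Bad permutations contribute $P(G, \pi, L) - 0 \geq 0$ to the difference by Lemma~\ref{lem: reduce}(i). For the identity, Theorem~\ref{thm: fenming} supplies the surplus $P(G, L) - P(G, k) \geq (k-m+1)\,k^{n-3}\,S$, where $S := \sum_{uv \in E(G)} |L(u) - L(v)|$; if $S = 0$ then connectedness of $G$ forces $L$ to be constant, in which case $P(G, \pi, L) = P(G, \pi, k)$ for every $\pi$ and the required identity already holds, so one may assume $S \geq 1$.

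For a good $\pi$, Lemma~\ref{lem: reduce}(ii) yields $P(G, \pi, k) = P(G : \pi, k)$ and $P(G, \pi, L) = P(G : \pi, L')$ with $L'(C_i) = \bigcap_{v \in C_i} L(v)$. If $L$ happens to be constant on every cycle of $\pi$ then $L'$ is a genuine $k$-assignment on $G : \pi$, and the labeled identity $P_\ell(H, k) = P(H, k)$ for $k \geq |E(H)|-1$ (from Theorem~\ref{thm: fenming} when $|E(H)| \geq 4$ and a routine direct check otherwise) applied to $H = G : \pi$ forces $P(G, \pi, L) \geq P(G, \pi, k)$. Otherwise $|L'(C_j)| < k$ for some $j$; extending $L'$ to any $k$-assignment $L''$ on $G : \pi$ and classifying the colorings counted by $P(G : \pi, L'') - P(G : \pi, L')$ according to the cycle at which a ``new'' color is used gives
\[
P(G : \pi, L'') - P(G : \pi, L') \;\leq\; \Bigl(\sum_{i} t_i^{(\pi)}\Bigr) k^{s-1},
\]
where $t_i^{(\pi)} := k - |L'(C_i)|$. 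Since $P(G : \pi, L'') \geq P(G : \pi, k)$ by the labeled identity and $s = |V(G : \pi)| \leq n-2$ by hypothesis, this yields $P(G, \pi, L) \geq P(G, \pi, k) - \bigl(\sum_i t_i^{(\pi)}\bigr) k^{n-3}$.

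The final step aggregates the $t_i^{(\pi)}$'s in terms of $S$: picking a spanning tree $T_i$ on each $C_i^{(\pi)}$ and observing that any color omitted from $L'(C_i) = \bigcap L(v)$ must ``flip'' across some tree edge gives $t_i^{(\pi)} \leq \sum_{(u,v) \in T_i} |L(u) \triangle L(v)|$. Routing each such pair $(u,v)$ along a $G$-path (which exists by connectedness) and applying the triangle inequality for symmetric differences bounds $|L(u) \triangle L(v)|$ by a sum of $|L(x) \triangle L(y)|$ over $G$-edges; aggregating across all good $\pi$'s and all $i$'s and swapping the order of summation then yields $\sum_{\pi\text{ good}}\sum_i t_i^{(\pi)} \leq C \cdot S$ for some constant $C = C(G, \Aut(G))$ independent of $k$ and $L$. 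The total shortfall across good $\pi$'s is thus at most $C\,S\,k^{n-3}$, which the Fenming surplus $(k-m+1)\,k^{n-3}\,S$ dominates once $k \geq m + C$, so any such $k$ serves as $N$. I expect the main obstacle to be this last aggregation, since the within-cycle variation of $L$ on a good $\pi$ does not directly produce $G$-edges with differing lists (the cycles of a good $\pi$ are $G$-independent), and so one must transport the variation to genuine $G$-edges via paths while bounding the multiplicity with which any fixed $G$-edge is re-used. The role of the hypothesis $s \leq n-2$ is precisely to push the shortfall's $k$-degree down to $n-3$, one below the surplus degree $n-2$, so that the Fenming surplus can absorb the shortfall once $k$ is large enough.
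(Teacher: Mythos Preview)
Your argument is correct and reaches the same conclusion, but it is organized rather differently from the paper's proof. The paper separates the ``good'' non-identity automorphisms into those with exactly $n-2$ cycles (a set $B$ of size $b$) and those with at most $n-3$ cycles; for the latter it simply uses the crude bound $P(G:\pi,k)\le k^{n-3}$ and discards $P(G,\pi,L)$ entirely, while for $\pi\in B$ it lower-bounds $P(G,\pi,L)$ by a falling factorial via the intersection estimate $|L'(z)|\ge k-2s$ (this is where the paper's Lemma~\ref{lem: sumbound} enters, exploiting that cycles of $\pi\in B$ have at most three vertices). The paper then splits further into the cases $s\ge b+1$ and $0<s\le b$ and balances the two error terms against the Dong--Zhang surplus in each case, obtaining an explicit $N$ depending on $a=|A|$, $b$, and $m$.

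Your route avoids both splits: you treat every good $\pi$ uniformly by extending $L'$ to a genuine $k$-assignment $L''$ on $G:\pi$, invoking the labeled identity $P_\ell(G:\pi,k)=P(G:\pi,k)$ (valid since $|E(G:\pi)|\le m$) to get $P(G:\pi,L'')\ge P(G:\pi,k)$, and then controlling the loss $P(G:\pi,L'')-P(G:\pi,L')$ by $\bigl(\sum_i t_i^{(\pi)}\bigr)k^{s-1}\le\bigl(\sum_i t_i^{(\pi)}\bigr)k^{n-3}$. Your aggregation $\sum_{\pi}\sum_i t_i^{(\pi)}\le C\cdot S$ via spanning trees on cycles and routing along $G$-paths is a clean abstraction of the same idea underlying Lemma~\ref{lem: sumbound}, carried out in greater generality. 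The trade-off is that the paper's explicit $N$ is lost: your constant $C$ depends on the chosen trees and paths and on $|\Aut(G)|$, so the resulting threshold $k\ge m+C$ is less concrete (and typically larger) than the paper's, but the argument is more modular and entirely avoids the case analysis on $s$.
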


After we handle  unlabeled, connected, point-determining graphs of size at most 3, Theorem~\ref{thm: usefenmingnminus2} allows us to prove the following.

\begin{cor} \label{cor: pointdetermine}
If $\mathcal{G}$ is an unlabeled, connected, point-determining graph, then there exists an $N \in \N$ such that $P_{\ell}(\mathcal{G},k) = P(\mathcal{G},k)$ whenever $k \geq N$.
\end{cor}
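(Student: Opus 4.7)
The plan is to combine Theorem~\ref{thm: usefenmingnminus2} with a short structural observation about point-determining graphs, after handling the finitely many small connected point-determining graphs separately.

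First I would enumerate the connected point-determining graphs of size at most $3$; up to isomorphism these appear to be only $K_2$, $K_3$, and $P_4$ (note that $P_3$ is \emph{not} point-determining because its two leaves share the neighborhood consisting of the middle vertex). For each of these three graphs the automorphism group is small ($|\Aut(K_2)|=2$, $|\Aut(K_3)|=6$, $|\Aut(P_4)|=2$), and I would carry out a direct orbit count of $U(G,L)$ under $\Aut(G)$ for an arbitrary $k$-assignment $L$. I expect each verification to be routine and to yield $u_{\ell}(G,L) \geq P(\mathcal{G},k)$ for all sufficiently large $k$, supplying the base case of the corollary.

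For the main case $m \geq 4$ I would apply Theorem~\ref{thm: usefenmingnminus2} to $\mathcal{G}$. The hypothesis to verify is: for every non-identity $\pi \in \Aut(G)$ whose cycle decomposition $C_1 \ldots C_s$ consists only of cycles that are independent sets in $G$, the inequality $s \leq n-2$ holds. The crucial observation is that since $\sum_i |C_i| = n$ and $\pi \neq id$, having $s \geq n-1$ would force $s = n-1$ with exactly one $2$-cycle $(u\;v)$ and $n-2$ fixed points, i.e., $\pi$ is a single transposition. The cycle-independence hypothesis then gives $uv \notin E(G)$, and because $\pi$ is an automorphism fixing every vertex in $V(G) \setminus \{u,v\}$, for any such $w$ we have $w \in N_G(u)$ if and only if $w = \pi(w) \in N_G(\pi(u)) = N_G(v)$. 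Combined with $uv \notin E(G)$ this yields $N_G(u) = N_G(v)$, contradicting the point-determining property. Hence $s \leq n-2$.

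Applying Theorem~\ref{thm: usefenmingnminus2} then produces the desired $N$. I do not anticipate a serious obstacle in this argument: the heavy machinery sits inside Theorem~\ref{thm: usefenmingnminus2} (and ultimately inside Theorem~\ref{thm: fenming}), while the structural step for point-determining graphs is a one-line consequence of the definition. The only mildly fiddly part is the base case $m \leq 3$, but there only three graphs need checking, and each is small enough for direct computation.
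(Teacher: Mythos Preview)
Your argument is essentially the paper's: the $m \geq 4$ case is reduced to Theorem~\ref{thm: usefenmingnminus2} via exactly the transposition observation you spell out (stated in the paper as ``$G$ is point-determining if and only if every transposition in $\Aut(G)$ swaps adjacent vertices''). For $m \leq 3$ the paper avoids your direct orbit counts by noting that $K_1, K_2, K_3, P_4$ are chordal (so $P_\ell(G,k)=P(G,k)$ by Proposition~\ref{pro: chordal}) and that each satisfies the hypothesis of Proposition~\ref{pro: oneaut}; you should also add $K_1$ to your list.
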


It is natural to ask whether we can extend this theorem beyond the class of point-determining graphs. We show that the operation of taking the join of an appropriate point-determining graph (e.g., complete graphs and asymmetric graphs) with two nonadjacent vertices will preserve this property. Note the new graph we get by this operation is not point-determining.

\begin{thm} \label{thm: transposition}
Suppose $\mathcal{G}$ is an unlabeled graph of order $n$ with $n$ possibly zero. Suppose $G \in \mathcal{G}$.  Suppose that $G$ is point-determining and for each $\pi \in \Aut(G) - \{id\}$ if $C_1 \ldots C_s$ is the cycle decomposition of $\pi$, then there is an $i \in [s]$ such that $C_i$ contains at least two adjacent vertices. 

Let $G'$ be the graph obtained from $G$ by adding vertices $x$ and $y$ and edges so that $x$ and $y$ are adjacent to every element of $V(G)$.  Also, let $\mathcal{G}'$ be the unlabeled graph such that $G' \in \mathcal{G}'$.  Then, there exists an $N \in \N$ such that $P_{\ell}(\mathcal{G}',k) = P(\mathcal{G}',k)$ whenever $k \geq N$.
\end{thm}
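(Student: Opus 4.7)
The plan is to first analyze $\Aut(G')$, use Lemma~\ref{lem: ineq} to reduce the claim to a single polynomial inequality, and then prove this inequality for $k$ large by combining Theorem~\ref{thm: fenming} applied to the auxiliary graph $K_2 \vee G$ with a direct lower bound on the contribution of the $\tau$-fixed colorings. Because $G$ is point-determining, the only pair of distinct vertices of $G'$ sharing a common open neighborhood is $\{x,y\}$ (whose common neighborhood is $V(G)$); hence every automorphism of $G'$ preserves $\{x,y\}$ setwise and restricts to an automorphism of $G$, so $\Aut(G') = \Aut(G) \times \langle \tau \rangle$ where $\tau$ swaps $x$ and $y$. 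Combined with the hypothesis on $\Aut(G)$, every $\pi = \sigma\tau^{\epsilon} \in \Aut(G')$ with $\sigma \neq id$ has a cycle containing two adjacent vertices of $G'$, so Theorem~\ref{thm: Hanlon} together with Lemma~\ref{lem: reduce} gives
$$P(\mathcal{G}', k) \;=\; \frac{1}{2|\Aut(G)|}\bigl[\,P(G', k) + k\,P(G, k-1)\,\bigr],$$
using $G' : \tau = K_1 \vee G$. For any $k$-assignment $L$ of $G'$, Lemma~\ref{lem: ineq} then reduces the goal $u_{\ell}(G', L) \geq P(\mathcal{G}', k)$ to
$$P(G', L) + P_{\mathrm{eq}}(G', L) \;\geq\; P(G', k) + k\,P(G, k-1),$$
where $P_{\mathrm{eq}}(G', L)$ denotes the number of proper $L$-colorings of $G'$ with $f(x) = f(y)$.

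Splitting $P(G', L) = P_{\mathrm{eq}}(G', L) + P_{\mathrm{neq}}(G', L)$ and $P(G', k) = kP(G, k-1) + k(k-1)P(G, k-2)$, the key inequality becomes
$$\bigl[\,P_{\mathrm{neq}}(G', L) - k(k-1)P(G, k-2)\,\bigr] + 2\bigl[\,P_{\mathrm{eq}}(G', L) - k\,P(G, k-1)\,\bigr] \;\geq\; 0.$$
The first bracket is exactly $P(K_2 \vee G, L) - P(K_2 \vee G, k)$, so Theorem~\ref{thm: fenming} applied to $K_2 \vee G$ bounds it below by a positive factor times $(k - m^{*} + 1)\,k^{n-1} \sum_{uv \in E(K_2 \vee G)} |L(u) - L(v)|$, with $m^{*} = |E(K_2 \vee G)|$. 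The set-difference triangle inequality $|A - B| \leq |A - C| + |C - B|$ (valid whenever $|A| = |B| = |C| = k$) applied with $C = L(v)$ for each $v \in V(G)$, together with the edge $xy$, yields $\sum_{uv \in E(K_2 \vee G)} |L(u) - L(v)| \geq (n+1)(k - t)$ where $t = |L(x) \cap L(y)|$. For the second bracket, expanding $P_{\mathrm{eq}}(G', L) = \sum_{c \in L(x) \cap L(y)} P(G, L|_{V(G)} \setminus \{c\})$ and using the labeled equality $P_{\ell}(G, k-1) = P(G, k-1)$ for $k$ large (Theorem~\ref{thm: fenming} plus the footnoted small-size cases) yields $P_{\mathrm{eq}}(G', L) \geq t\,P(G, k-1)$. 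Combining, the target reduces to
$$(k-t)\bigl[\,(n+1)(k - m^{*} + 1)\,k^{n-1} - 2\,P(G, k-1)\,\bigr] \;\geq\; 0,$$
whose bracket has leading term $(n-1)\,k^n$ in $k$ and is therefore nonnegative for all sufficiently large $k$ as soon as $n \geq 2$.

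The main obstacle is precisely this interplay: making the excess from Theorem~\ref{thm: fenming} on $K_2 \vee G$ dominate the shortfall in $P_{\mathrm{eq}}(G', L)$, which works because the triangle inequality produces $n$ extra copies of $k - t$ beyond the single copy from the edge $xy$. The asymptotic argument fails for $n \leq 1$, and these small cases require separate direct treatment. For $n = 0$ we have $G' = \overline{K}_2$; Lemma~\ref{lem: ineq} is not tight enough here (as already noted for edgeless graphs), so I would instead use a direct orbit count to obtain $u_{\ell}(G', L) = k^2 - \binom{t}{2}$, which is at least $\binom{k+1}{2} = P(\mathcal{G}', k)$ since $t \leq k$. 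For $n = 1$ the only possibility is $G = K_1$ and $G' = P_3$; the same Lemma~\ref{lem: ineq} reduction still applies, and a short expansion shows that the resulting inequality is equivalent to $|L(x) \cap L(v)| + |L(y) \cap L(v)| - |L(x) \cap L(y)| \leq k$, which is immediate from inclusion-exclusion.
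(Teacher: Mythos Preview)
Your argument is correct, but it takes a genuinely different route from the paper's.

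The paper does \emph{not} invoke Lemma~\ref{lem: ineq} or Theorem~\ref{thm: fenming} here at all. Instead it bounds $u_{\ell}(G',L)$ by partitioning the proper $L$-colorings of $G'$ according to whether $f(x),f(y)$ lie in $C=L(x)\cap L(y)$ and whether they coincide, and then upper-bounding the size of each equivalence class directly: if $f(x)=f(y)$, or if one of $f(x),f(y)$ lies outside $C$, the class has size at most $|\Aut(G)|$ rather than $2|\Aut(G)|$. This yields
\[
u_{\ell}(G',L)\;\ge\;\frac{c(c-1)P(G,k-2)}{2|\Aut(G)|}+\frac{cP(G,k-1)}{|\Aut(G)|}+\frac{(k^2-c^2)P(G,k-2)}{|\Aut(G)|},
\]
and subtraction from $P(\mathcal{G}',k)$ gives a single factor $(k-c)\bigl(2P(G,k-1)-(c+k+1)P(G,k-2)\bigr)$, which is handled for large $k$ by the elementary limit $P(G,k-1)/P(G,k-2)\to 1$.

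Your approach instead pushes everything through Lemma~\ref{lem: ineq}, reducing to $P(G',L)+P_{\mathrm{eq}}(G',L)\ge P(G',k)+kP(G,k-1)$, and then applies Theorem~\ref{thm: fenming} to the auxiliary graph $K_{2}\vee G$ together with the set-difference triangle inequality to extract $(n+1)(k-t)$ from the edge sum. This is a clever reuse of the paper's general machinery, and the leading-term count $(n-1)k^{n}$ is right for $n\ge 2$. Your separate treatments of $n=0$ (direct orbit count giving $u_{\ell}=k^{2}-\binom{t}{2}$) and $n=1$ (the inequality $a+b-t\le k$, which is exactly the triangle inequality $|L(x)-L(y)|\le|L(x)-L(v)|+|L(v)-L(y)|$ rewritten) are also correct; calling the latter ``inclusion--exclusion'' is slightly imprecise but the content is right.

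The trade-off: the paper's class-size argument is more elementary and self-contained, needing no appeal to the Dong--Zhang bound and working uniformly for all $n\ge 0$ (the ratio criterion $P(G,k-1)/P(G,k-2)\le (k+1)/2$ covers even the degenerate cases). Your approach leans more heavily on existing tools and requires case-splitting for small $n$, but has the virtue of showing how Theorem~\ref{thm: fenming} can be exploited via an auxiliary join graph---a template that might extend to other constructions.
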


This theorem gives evidence that the answer to Question~\ref{ques: fundamental2} may very well be all unlabeled graphs.

\section{Preliminary Results}\label{prelim}

We start with a useful observation.

\begin{pro} \label{pro: doug}
Suppose $L$ is a $k$-assignment for graph $G$.  If $f$ and $g$ are proper $L$-colorings of $G$ such that there is a $\sigma \in \Aut(G)$ satisfying $f \sigma = g$, then
$$\{ \pi \in \Aut(G) : f \pi = g \} = \{ \pi \in \Aut(G) : f \pi = f \}.$$
\end{pro}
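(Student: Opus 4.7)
The plan is to deduce the identity from the hypothesis $f\sigma = g$ by using right-multiplication by $\sigma^{\pm 1}$ to translate between the two defining conditions. Denote $S_1 = \{\pi \in \Aut(G) : f\pi = g\}$ and $S_2 = \{\pi \in \Aut(G) : f\pi = f\}$; I will match elements of these two sets by showing each is obtained from the other by composing with $\sigma$ on the right.

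First, I would take any $\pi \in S_1$ and verify that $\pi\sigma^{-1}$ lies in $S_2$. The computation combines $f\pi = g$ with $f\sigma = g$ to yield $f\pi = f\sigma$ as functions on $V(G)$, so $f(\pi(v)) = f(\sigma(v))$ for every $v \in V(G)$. Substituting $v \mapsto \sigma^{-1}(v)$ then gives $(f\pi\sigma^{-1})(v) = f(v)$, placing $\pi\sigma^{-1}$ in $S_2$. For the reverse direction, I would take $\tau \in S_2$ and compute $(f\tau\sigma)(v) = f(\tau(\sigma(v))) = f(\sigma(v)) = g(v)$, using $f\tau = f$ for the middle equality and $f\sigma = g$ for the last; this places $\tau\sigma$ in $S_1$.

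These two verifications together establish the desired identification: right-multiplication by $\sigma$ carries $S_2$ onto $S_1$, with inverse given by right-multiplication by $\sigma^{-1}$, so the two sets coincide (equivalently, $S_1$ and $S_2$ are the same coset of the stabilizer of $f$, with $\sigma$ serving as the translation). The only subtle point is bookkeeping the composition order: since $f\pi$ denotes $f \circ \pi$ in the paper's convention, the correction factor $\sigma^{\pm 1}$ must be applied on the right of $\pi$ rather than on the left; I expect this to be the only place where one might slip. Conceptually, the proposition is the orbit–stabilizer correspondence for the action of $\Aut(G)$ on proper $L$-colorings by precomposition, with the hypothesis $f\sigma = g$ recording that $g$ lies in the $\Aut(G)$-orbit of $f$.
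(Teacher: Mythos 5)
Your proof is essentially the paper's own: the authors likewise define the maps $\pi \mapsto \pi\sigma$ and $\pi \mapsto \pi\sigma^{-1}$ between the two sets and conclude from their being (mutually inverse) injections, so your verifications that these maps land in the correct sets fill in exactly the details they leave to the reader. One caveat, which applies equally to the paper's own write-up: a bijection gives $|S_1| = |S_2|$, not literal set equality --- indeed $id \in S_2$ but $id \in S_1$ only if $f = g$, and your parenthetical that $S_1$ and $S_2$ are ``the same coset'' is off for the same reason ($S_2$ is the stabilizer while $S_1 = S_2\sigma$ is generally a different right coset); the cardinality statement is, however, all that is used in the proof of Lemma~\ref{lem: ineq}.
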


\begin{proof}
Let $A = \{ \pi \in \Aut(G) : f \pi = g \}$ and $B = \{ \pi \in \Aut(G) : f \pi = f \}$.  Let $M_1 : B \rightarrow A$ be given by $M_1(\pi) = \pi \sigma$, and let $M_2: A \rightarrow B$ be given by $M_2(\pi) = \pi \sigma^{-1}$.  The result follows from the fact that $M_1$ and $M_2$ are injections.
\end{proof}

Next, we define a generalization of proper $(\pi,k)$-coloring and use it to give a lower bound on $P_{\ell}(\mathcal{G},k)$. Suppose that $G \in \mathcal{G}$ and $\pi \in \Aut(G)$. Let $L$ be a $k$-assignment of $G$. A \emph{proper $(\pi,L)$-coloring} is a proper $L$-coloring, $f$, of $G$ with the property that $f(\pi(v)) = f(v)$ for each $v \in V(G)$.  Let $P(G, \pi, L)$ be the number of proper $(\pi,L)$-colorings of $G$. We now partially extend Theorem~\ref{thm: Hanlon} to the list context.

\begin{customlem}{\bf\ref{lem: ineq}}
Suppose $L$ is a $k$-assignment for $G \in \mathcal{G}$. Then, $$u_{\ell}(G,L) \ge \frac{1}{|\Aut(G)|} \sum\limits_{\pi \in \Aut(G)} P(G, \pi, L)$$.
\end{customlem}

\begin{proof}
    Let $\mathcal{E}$ be the set of all equivalence classes counted by $u_{\ell}(G,L)$, and for each $E \in \mathcal{E}$, let $f_E$ be a representative of the equivalence class $E$.  Then,
\begin{align*}
u_{\ell}(G,L) &= \frac{1}{|\Aut(G)|} \sum_{E \in \mathcal{E}} |\Aut(G)| \\
 &\geq \frac{1}{|\Aut(G)|} \sum_{E \in \mathcal{E}} \sum_{g \in E} |\{ \pi \in \Aut(G) : g \pi = f_E \}| \\
 &= \frac{1}{|\Aut(G)|} \sum_{E \in \mathcal{E}} \sum_{g \in E} |\{ \pi \in \Aut(G) : g \pi = g \}|\\
&= \frac{1}{|\Aut(G)|} \sum_{\pi \in \Aut(G)} P(G, \pi, L).
\end{align*}

\end{proof}

Next, we prove a generalization of Lemma~\ref{lem: reduce} to the list coloring context.

\begin{lem} \label{lem: reduce-list}
Suppose $G$ is a graph with $V(G) = \{v_1, \ldots, v_n \}$.  Also, suppose that $\pi \in \Aut(G)$ and $C_1 \ldots C_s$ is the cycle decomposition of $\pi$.  If $L$ is a $k$-assignment for $G$, then the following statements hold.
\\
(i)  If there is an $i \in [s]$ such that $C_i$ contains two adjacent vertices in $G$, then $P(G, \pi, L) = 0$.
\\
(ii)  Otherwise, $P(G, \pi, L) = P(G : \pi , L')$ where $L'$ is the list assignment for $G : \pi$ given by $L'(C_i) = \bigcap_{v \in C_i} L(v)$ for each $i \in [s]$. 
\end{lem}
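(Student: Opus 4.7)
The plan is to adapt the argument for Lemma~\ref{lem: reduce} to the list-coloring context by establishing a bijection between proper $(\pi,L)$-colorings of $G$ and proper $L'$-colorings of $G:\pi$. The key observation that drives both parts is that the condition $f(\pi(v)) = f(v)$ for all $v \in V(G)$ forces any proper $(\pi,L)$-coloring $f$ to be constant on each cycle $C_i$ of $\pi$. This is because iterating $\pi$ shows $f$ is constant on each orbit of $\langle \pi \rangle$, which coincides with a cycle of $\pi$.

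For part (i), I would argue directly: suppose some $C_i$ contains two adjacent vertices $u,w \in V(G)$. Any proper $(\pi,L)$-coloring $f$ satisfies $f(u) = f(w)$ by constancy on $C_i$, contradicting that $f$ is proper since $uw \in E(G)$. Hence no such $f$ exists and $P(G,\pi,L) = 0$.

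For part (ii), I would define the bijection $\Phi$ from proper $(\pi,L)$-colorings of $G$ to proper $L'$-colorings of $G:\pi$ by $\Phi(f)(C_i) = f(v)$ for any $v \in C_i$; this is well-defined by constancy on cycles. To verify $\Phi(f)$ is a proper $L'$-coloring: the color $f(v)$ lies in $L(v)$ for \emph{every} $v \in C_i$, so $\Phi(f)(C_i) \in \bigcap_{v \in C_i} L(v) = L'(C_i)$; and if $C_iC_j \in E(G:\pi)$, then some $u \in C_i$ and $v \in C_j$ are adjacent in $G$, forcing $\Phi(f)(C_i) = f(u) \neq f(v) = \Phi(f)(C_j)$. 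For the inverse, given a proper $L'$-coloring $g$ of $G:\pi$, define $\Psi(g)(v) = g(C_i)$ whenever $v \in C_i$; I would check that $\Psi(g)$ respects the lists (since $L'(C_i) \subseteq L(v)$ for each $v \in C_i$), is constant on each cycle (hence $(\pi)$-invariant), and is proper on $G$ (using the hypothesis that no $C_i$ contains two adjacent vertices of $G$, so any edge of $G$ connects distinct cycles, giving a corresponding edge in $G:\pi$). A routine check shows $\Phi$ and $\Psi$ are mutual inverses, so $P(G,\pi,L) = P(G:\pi, L')$.

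I don't anticipate a genuine obstacle here; the lemma is the natural list analogue of Lemma~\ref{lem: reduce} and the only subtlety is identifying the correct list assignment on $G:\pi$, namely the intersection $L'(C_i) = \bigcap_{v \in C_i} L(v)$. This choice is forced by the two-sided requirement that $f(v) \in L(v)$ for every $v \in C_i$ simultaneously while $f$ is constant on $C_i$.
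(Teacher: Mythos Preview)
Your proposal is correct and follows essentially the same approach as the paper: both establish part~(i) via the constancy of $f$ on each cycle forcing a collision on an adjacent pair, and both prove part~(ii) by defining the natural map sending $f$ to the coloring $C_i \mapsto f(v)$ (any $v \in C_i$) and checking it is a bijection. Your write-up is in fact more detailed than the paper's, which simply asserts that this map is well-defined and ``easy to verify'' to be a bijection.
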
 

\begin{proof}
For statement (i), notice that in a proper $(\pi, L)$-coloring of $G$, each vertex in $C_i$ is colored with the same color.

For statement (ii), suppose $\mathcal{A}$ is the set of proper $(\pi,L)$-colorings of $G$, and suppose $\mathcal{B}$ is the set of proper $L'$-colorings of $G : \pi$.  To begin assume $\mathcal{A}$ is nonempty.  Let $M: \mathcal{A} \rightarrow \mathcal{B}$ be given by $M(f) = g$ where $g$ is the $L'$-coloring of $G : \pi$ obtained by coloring $C_j$ with $f(v)$ where $v$ is an arbitrarily chosen element of $C_j$ for each $j \in [s]$.  Notice that $M$ is well-defined since $f$ must color every vertex of a cycle of $\pi$ with the same color.  It is easy to verify that $M$ is a bijection.  Finally, notice that the fact that $M$ is a bijection implies that if $\mathcal{A}$ is empty, then $\mathcal{B}$ is empty.  
\end{proof}

We now give an application of Lemmas~\ref{lem: ineq} and~\ref{lem: reduce-list} which will be useful in the proof of Corollary~\ref{cor: pointdetermine}.

\begin{pro} \label{pro: oneaut}
Suppose $\mathcal{G}$ is an unlabeled graph and $G \in \mathcal{G}$.  Suppose that for each $\pi \in \Aut(G) - \{id \}$ if $C_1 \ldots C_s$ is the cycle decomposition of $\pi$, then there is an $i \in [s]$ such that $C_i$ contains two adjacent vertices.  Then, if $P_{\ell}(G,k) = P(G,k)$ for some $k \in \N$, $P_{\ell}(\mathcal{G},k) = P(\mathcal{G},k)$.
\end{pro}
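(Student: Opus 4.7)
The plan is to use the cycle-decomposition hypothesis on $\Aut(G)$ to collapse both Theorem~\ref{thm: Hanlon} and Lemma~\ref{lem: ineq} to their $\pi = id$ contributions, after which the result drops out immediately.

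First, I would invoke Lemma~\ref{lem: reduce}(i) and Lemma~\ref{lem: reduce-list}(i) simultaneously. By hypothesis, for every $\pi \in \Aut(G) - \{id\}$ some cycle $C_i$ in the decomposition of $\pi$ contains two adjacent vertices, so these lemmas give $P(G,\pi,k) = 0$ and $P(G,\pi,L) = 0$ for every $k$-assignment $L$. Meanwhile, the $\pi = id$ terms are just $P(G,id,k) = P(G,k)$ and $P(G,id,L) = P(G,L)$, since a proper $(id,L)$-coloring is simply a proper $L$-coloring.

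Plugging these vanishings into Theorem~\ref{thm: Hanlon} yields
\[
P(\mathcal{G},k) \;=\; \frac{P(G,k)}{|\Aut(G)|}.
\]
Analogously, plugging into Lemma~\ref{lem: ineq} gives, for every $k$-assignment $L$ of $G$,
\[
u_\ell(G,L) \;\geq\; \frac{P(G,L)}{|\Aut(G)|}.
\]
Since $|\Aut(G)|$ is a constant independent of $L$, taking the minimum over all $k$-assignments on both sides produces $P_\ell(\mathcal{G},k) \geq P_\ell(G,k)/|\Aut(G)|$.

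Finally, assuming $P_\ell(G,k) = P(G,k)$, this becomes $P_\ell(\mathcal{G},k) \geq P(G,k)/|\Aut(G)| = P(\mathcal{G},k)$, and combined with the reverse inequality $P_\ell(\mathcal{G},k) \leq P(\mathcal{G},k)$ noted in Section~\ref{unlabeled}, equality holds. There is no real obstacle here: the hypothesis has been engineered precisely so that every non-identity automorphism contributes zero to both sides, and the only thing to check is that the minimum over $L$ commutes cleanly with division by the constant $|\Aut(G)|$.
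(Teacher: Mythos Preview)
Your proof is correct and follows essentially the same approach as the paper: both collapse Theorem~\ref{thm: Hanlon} and Lemma~\ref{lem: ineq} to their identity contributions via Lemmas~\ref{lem: reduce}(i) and~\ref{lem: reduce-list}(i), then sandwich $P_\ell(\mathcal{G},k)$ between $P_\ell(G,k)/|\Aut(G)|$ and $P(\mathcal{G},k)$. The only cosmetic difference is that the paper fixes an extremal $L$ with $u_\ell(G,L)=P_\ell(\mathcal{G},k)$ up front, whereas you derive the bound for all $L$ and then minimize, which is equivalent.
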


\begin{proof}
Since we know that for each $\pi \in \Aut(G) - \{id \}$ if $C_1 \ldots C_s$ is the cycle decomposition of $\pi$, then there is an $i \in [s]$ such that $C_i$ contains two adjacent vertices, Theorem~\ref{thm: Hanlon} and Lemma~\ref{lem: reduce} implies $P(\mathcal{G},k) = P(G, id, k)/|\Aut(G)| = P(G,k)/|\Aut(G)|$.

Now, suppose that $L$ is a $k$-assignment for $G$ such that $u_{\ell}(G,L) = P_{\ell}(\mathcal{G},k)$.  Then,  Lemmas~\ref{lem: ineq} and~\ref{lem: reduce-list} and the fact that $P_{\ell}(G,k) = P(G,k)$ imply
$$ P_{\ell}(\mathcal{G},k) = u_{\ell}(G,L) \geq \frac{P(G, id, L)}{|\Aut(G)|} \geq \frac{P_{\ell}(G,k)}{|\Aut(G)|} = P(\mathcal{G},k).$$
The desired result immediately follows from the fact $P_{\ell}(\mathcal{G},k) \leq P(\mathcal{G},k)$.
\end{proof}

We end this section by considering disconnected graphs. In particular, we can show that the unlabeled list color function of a disconnected graph equals the product of the corresponding list color functions of its components as long as the components are non-isomorphic. 

\begin{custompro}{\bf\ref{pro: disjoint}}
    Suppose $G_1$ and $G_2$ are two non-isomorphic connected graphs. Let $G$ be the disjoint union of $G_1$ and $G_2$. Suppose $\mathcal{G}$, $\mathcal{G}_1$, and $\mathcal{G}_2$ are the unlabeled graphs corresponding to $G$, $G_1$ and $G_2$, respectively.
    Then, $P_{\ell}(\mathcal{G},k) = P_{\ell}(\mathcal{G}_1,k)\;P_{\ell}(\mathcal{G}_2,k)$.
\end{custompro}

\begin{proof}
    Since $G_1$ and $G_2$ are not isomorphic the function $F: \Aut(G_1) \times \Aut(G_2) \rightarrow \Aut(G)$ given by $F(\pi_1, \pi_2) = \pi$ where 
    \[  \pi(v) = \left\{
\begin{array}{ll}
      \pi_1(v) & \text{if $v\in V(G_1)$} \\
      \pi_2(v) & \text{if $v\in V(G_2)$} 
 \end{array} \right.  \]
is a bijection.

Now, suppose that $L$ is a $k$-assignment for $G$ such that $u_{\ell}(G,L) = P_{\ell}(\mathcal{G},k)$.  For each $i \in [2]$, let $L_i$ be the $k$-assignment for $G_i$ obtained by restricting the domain of $L$ to $V(G_i)$.  We will show that $u_{\ell}(G,L) = u_{\ell}(G_1,L_1) u_{\ell}(G_2,L_2)$ which will imply $P_{\ell}(\mathcal{G},k) \geq P_{\ell}(\mathcal{G}_1,k) P_{\ell}(\mathcal{G}_2,k)$.  Note that if for some $i \in [2]$, $G_i$ is not $L_i$-colorable, then $G$ is not $L$-colorable and our desired equation clearly holds.  So, we may suppose that $G_i$ is $L_i$-colorable for each $i \in [2]$.  Let $\mathcal{E}_i$ denote the set equivalence classes counted by $u_{\ell}(G_i,L_i)$ for each $i \in [2]$, and let $\mathcal{E}$ denote the set of equivalence classes counted by $u_{\ell}(G,L)$.  We will prove the desired result by constructing a bijection $M: \mathcal{E}_1 \times \mathcal{E}_2 \rightarrow \mathcal{E}$.

Suppose $\mathcal{L}_i$ is the set of proper $L_i$-colorings of $G_i$ for each $i \in [2]$, and suppose $\mathcal{L}$ is the set of proper $L$-colorings of $G$.  Let $C: \mathcal{L}_1 \times \mathcal{L}_2 \rightarrow \mathcal{L}$ be the bijection given by $C(f_1,f_2) = f$ where $f$ is the proper $L$-coloring of $G$ formed by coloring the vertices in $G_i$ according to $f_i$ for each $i \in [2]$.  Now, suppose $M$ is given by
$$M(E_1,E_2) = \{C(f_1,f_2) : f_1 \in E_1, f_2 \in E_2 \}.$$
In order to prove $M$ is a function, we must show $M(E_1,E_2) \in \mathcal{E}$ whenever $E_i \in \mathcal{E}_i$ for each $i \in [2]$.  Suppose $g \in M(E_1,E_2)$, $g = C(g_1,g_2)$ where $g_i \in \mathcal{E}_i$ for each $i \in [2]$, and $E_g$ is the equivalence class in $\mathcal{E}$ containing $g$.  We claim $M(E_1,E_2) = E_g$ which will imply $M$ is a function.

Suppose $h \in M(E_1,E_2)$ is such that $h = C(h_1,h_2)$ where $h_i \in \mathcal{E}_i$ for each $i \in [2]$.  Now, since $h_i, g_i \in \mathcal{E}_i$ for each $i \in [2]$, there is a $\pi_i \in \Aut(G_i)$ such that $g_i \pi_i = h_i$ for each $i \in [2]$.  Consequently, $g F(\pi_1, \pi_2) = h$ which means that $g$ and $h$ are equivalent proper $L$-colorings of $G$ and $h \in E_g$.  Now, suppose $q \in E_g$.  Then, there is a $\sigma \in \Aut(G)$ such that $g \sigma = q$.  Suppose $(\sigma_1,\sigma_2) = F^{-1}(\sigma)$.  Then, $g_i \sigma_i \in E_i$ for each $i \in [2]$.  Also, if we let $q_i = g_i \sigma_i$ for each $i \in [2]$, then $q = C(q_1,q_2)$ which means $q \in M(E_1,E_2)$.  Thus, $M(E_1,E_2) = E_g$.

To see that $M$ is one-to-one suppose that $(A_1, A_2), (B_1,B_2) \in \mathcal{E}_1 \times \mathcal{E}_2$ satisfy $(A_1,A_2) \neq (B_1, B_2)$.  We will show that $M(A_1, A_2) \neq M(B_1,B_2)$ when $A_1 \neq B_1$ (the case where $A_1=B_1$ and $A_2 \neq B_2$ is similar).  Since $A_1$ and $B_1$ are different equivalence classes, they are nonempty disjoint sets.  Suppose $a_1 \in A_1$ and $a_2 \in A_2$.  Then $C(a_1,a_2) \in M(A_1,A_2)$, but since $C$ is a bijection and $a_1 \notin B_1$, we have $C(a_1,a_2) \notin M(B_1,B_2)$.  Thus, $M(A_1,A_2) \neq M(B_1,B_2)$.

To see that $M$ is onto, suppose that $E$ is an arbitrary element of $\mathcal{E}$ that contains the proper $L$-coloring $f$.  Suppose $C^{-1}(f) = (f_1,f_2)$ and $E_i$ is the element of $\mathcal{E}_i$ containing $f_i$ for each $i \in [2]$.  Now, by our proof that $M$ is a function, we know that since $ f \in M(E_1,E_2)$, $M(E_1,E_2)$ is the equivalence class in $\mathcal{E}$ containing $f$; that is, $M(E_1,E_2) = E$.  Thus, $M$ is a bijection.

Finally, to see that $P_{\ell}(\mathcal{G},k) \leq P_{\ell}(\mathcal{G}_1,k) P_{\ell}(\mathcal{G}_2,k)$,
suppose that for each $i \in [2]$, $\mathcal{K}_i$ is a $k$-assignment for $G_i$ such that $u_{\ell}(G_i,\mathcal{K}_i) = P_{\ell}(\mathcal{G}_i,k)$.  Suppose that $\mathcal{K}$ is the $k$-assignment for $G$ obtained by assigning each $v \in V(G) \cap V(G_i)$ the list $\mathcal{K}_i(v)$ for each $i \in [2]$.  The same argument as the argument employed above can be used to show $u_{\ell}(G,\mathcal{K}) = u_{\ell}(G_1,\mathcal{K}_1) u_{\ell}(G_2,\mathcal{K}_2)$ which implies $P_{\ell}(\mathcal{G},k) \leq u_{\ell}(G,K) = P_{\ell}(\mathcal{G}_1,k) P_{\ell}(\mathcal{G}_2,k)$.  Thus, our proof is complete.  \end{proof}

\section{Point-Determining Graphs and Beyond}\label{proof}

In this section our first priority is to prove Theorem~\ref{thm: usefenmingnminus2}.  Before we give the proof, we need a lemma.

\begin{lem} \label{lem: sumbound}
Suppose $G = P_n$ with $n \geq 2$.  Suppose that the vertices of $G$ in order are $v_1, \ldots, v_n$.  Suppose $L$ is a $k$-assignment for $G$, and let $s = \sum_{i=2}^n |L(v_i) - L(v_{i-1})|$.  If $s < k$, then $|L(v_1) \cap L(v_n)| \geq k-s$.
\end{lem}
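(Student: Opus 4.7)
The plan is to reduce the claim $|L(v_1) \cap L(v_n)| \geq k-s$ to showing $|L(v_1) \setminus L(v_n)| \leq s$: since $|L(v_1)| = k$, the two formulations are equivalent via $|L(v_1) \cap L(v_n)| = k - |L(v_1) \setminus L(v_n)|$. Note also that because every list has size $k$, we have $|L(v_i) \setminus L(v_{i-1})| = |L(v_{i-1}) \setminus L(v_i)|$, so the quantity $s$ may equally be written as $\sum_{i=2}^n |L(v_{i-1}) \setminus L(v_i)|$.

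The key step is an ``exit time'' injection. Given any color $c \in L(v_1) \setminus L(v_n)$, let $i(c) \in \{2, \ldots, n\}$ be the smallest index with $c \notin L(v_{i(c)})$; such an index exists because $c \notin L(v_n)$. By minimality, $c \in L(v_{i(c)-1})$, so $c \in L(v_{i(c)-1}) \setminus L(v_{i(c)})$. The map $c \mapsto (i(c), c)$ is then an injection from $L(v_1) \setminus L(v_n)$ into the disjoint union $\bigsqcup_{i=2}^n \bigl(L(v_{i-1}) \setminus L(v_i)\bigr)$, yielding
\[
|L(v_1) \setminus L(v_n)| \;\leq\; \sum_{i=2}^n |L(v_{i-1}) \setminus L(v_i)| \;=\; s,
\]
and the lemma follows.

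The argument is essentially a first-principles counting exercise, so I do not anticipate any significant obstacle; the hypothesis $s < k$ is used only to ensure the bound is nontrivial (it guarantees $L(v_1) \cap L(v_n) \neq \emptyset$). An alternative route is induction on $n$: the base case $n = 2$ reduces to the identity $|L(v_1) \cap L(v_2)| = k - |L(v_1) \setminus L(v_2)|$, and the inductive step combines the inductive hypothesis applied to the prefix path ending at $v_{n-1}$ with the standard bound $|A \cap B \cap C| \geq |A \cap B| + |B \cap C| - |B|$ taking $A = L(v_1)$, $B = L(v_{n-1})$, $C = L(v_n)$.
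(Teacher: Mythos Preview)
Your proof is correct, and the primary argument (the ``exit time'' injection) is a genuinely different route from the paper's. The paper proceeds by induction on $n$, which is precisely the alternative you sketch at the end: apply the inductive hypothesis to the prefix path $v_1,\ldots,v_{n-1}$ to get $|L(v_1)\cap L(v_{n-1})|\geq k-(s-|L(v_n)\setminus L(v_{n-1})|)$, then combine with the inclusion--exclusion bound $|A\cap C|\geq |A\cap B|+|B\cap C|-|B|$ taking $A=L(v_1)$, $B=L(v_{n-1})$, $C=L(v_n)$. Your injection argument is cleaner: it avoids induction entirely, makes the inequality $|L(v_1)\setminus L(v_n)|\leq s$ immediate, and (as you observe) never actually invokes the hypothesis $s<k$, so it yields the unconditional bound $|L(v_1)\cap L(v_n)|\geq k-s$ with no case analysis. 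The paper's inductive approach, by contrast, implicitly relies on the prefix sums also being less than $k$ to apply the induction hypothesis as stated, though this is automatic since each prefix sum is at most $s$.
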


\begin{proof}
The proof is by induction on $n$.  Note that the result is clear when $n=2$.  So, suppose that $n > 2$ and the desired result holds for all integers greater than 1 and less than $n$.

By the induction hypothesis we have that $|L(v_1) \cap L(v_{n-1})| \geq k - (s - |L(v_n) - L(v_{n-1})|)$ which implies $|L(v_1) \cap L(v_{n-1})|- |L(v_n) - L(v_{n-1})| \geq k - s$ which implies $|L(v_1) \cap L(v_{n-1})| + |L(v_{n-1}) \cap L(v_{n})| - k \geq k-s$.

Now, notice that of the elements in $(L(v_{n-1}) \cap L(v_{n}))$ at most $|L(v_{n-1})-L(v_1)| = k - |L(v_1) \cap L(v_{n-1})|$ are not in $L(v_1)$.  Consequently, $|L(v_1) \cap L(v_n)| \geq |L(v_{n-1}) \cap L(v_{n})| - (k - |L(v_1) \cap L(v_{n-1})|) = |L(v_1) \cap L(v_{n-1})| + |L(v_{n-1}) \cap L(v_{n})| - k$.  The desired result follows.
\end{proof}

We are now ready to prove Theorem~\ref{thm: usefenmingnminus2}.

\begin{customthm} {\bf\ref{thm: usefenmingnminus2}}
Suppose $\mathcal{G}$ is an unlabeled, connected graph of order $n$ and size $m \geq 4$ with $G \in \mathcal{G}$.  Suppose for each $\pi \in \Aut(G) - \{id \}$ with cycle decomposition $C_1 \ldots C_s$ we have: If for each $i \in [s]$ the vertices in $C_i$ are pairwise nonadjacent in $G$, $s \leq n-2$.  Then, there exists an $N \in \N$ such that $P_{\ell}(\mathcal{G},k) = P(\mathcal{G},k)$ whenever $k \geq N$.
\end{customthm}

\begin{proof}
Let $A$ be the set of all $\pi \in \Aut(G)$ such that if $C_1 \ldots C_s$ is the cycle decomposition of $\pi$, then for each $i \in [s]$ the vertices in $C_i$ are pairwise nonadjacent in $G$.  Let $B$ be the subset of $A$ that consists of all the elements of $A$ with exactly $n-2$ cycles in its cycle decomposition.  Also, let $|A|=a$ and $|B|=b$.  By Theorem~\ref{thm: Hanlon} and Lemma~\ref{lem: reduce}, we have that for each $k \in \N$,
$$ P(\mathcal{G},k) = \frac{1}{|\Aut(G)|} \sum_{\pi \in \Aut(G)} P(G, \pi, k) =\frac{1}{|\Aut(G)|} \sum_{\pi \in A} P(G : \pi, k).$$
The hypotheses imply that for each $\pi \in A - (\{id\} \cup B)$ and $k \in \N$, $P(G : \pi, k) \leq k^{n-3}$, and for each $\pi \in B$ and $k \in \N$, $P(G : \pi, k) \leq k^{n-2}$. Consequently,
$$ P(\mathcal{G},k) \leq \frac{1}{|\Aut(G)|} \left( P(G,k) + bk^{n-2} + (a-b)k^{n-3}  \right).$$

Now, for some fixed $k \in \N$ satisfying $k \geq m-1$ suppose that $L$ is a $k$-assignment for $G$. Let $s = \sum_{uv\in E(G)}|L(u) - L(v)|$. Then by Lemma~\ref{lem: ineq} and Theorem~\ref{thm: fenming},
\begin{align*}
u_{\ell}(G,L) &\geq \frac{1}{|\Aut(G)|} P(G,L) \\
&\geq \frac{1}{|\Aut(G)|} \left( P(G,k) + s(k - m + 1)k^{n-3} \right). 
\end{align*}
So, if $s \geq b+1$ we obtain:
$$P(\mathcal{G},k) - u_{\ell}(G,L) \leq \frac{1}{|\Aut(G)|}  \left(-k^{n-2} + ((a-b)+(m-1)(b+1))k^{n-3}   \right).$$
So, when $s \geq b+1$ and $k > (a-b) + (m - 1)(b+1)$, we have that $u_{\ell}(G,L) > P(\mathcal{G},k)$.   

Now, suppose that $0< s \leq b$ and $k > 2b + n - 3$.  Then by Lemma~\ref{lem: ineq} and Theorem~\ref{thm: fenming},
\begin{align*}
u_{\ell}(G,L) &\geq \frac{1}{|\Aut(G)|} \left( P(G,L) + \sum_{\pi \in B} P(G, \pi, L) \right) \\
&\geq \frac{1}{|\Aut(G)|} \left( P(G,k) + s(k - m + 1)k^{n-3} + \sum_{\pi \in B} P(G, \pi, L) \right). 
\end{align*}
Using the notation of Lemma~\ref{lem: reduce-list}, we know that for $\pi \in B$, $P(G, \pi, L) = P(G : \pi, L')$.  We also have that $G : \pi$ is an $n-2$ vertex graph, and the cycle decomposition of $\pi$ consists of $(n-4)$ 1-cycles and two $2$-cycles, or the cycle decomposition of $\pi$ consists of $(n-3)$ 1-cycles and one $3$-cycle.  Since $G$ is connected, Lemma~\ref{lem: sumbound} implies that for any $u,v \in V(G)$, $|L(u) \cap L(v)| \geq k-s$ which also implies for any $u,v,w \in V(G)$, $|L(u) \cap L(v) \cap L(w)| \geq k-2s$.  Consequently, we have that $|L'(z)| \geq k-2s \geq k - 2b$ for each $z \in V(G: \pi)$, and $P(G, \pi, L) = P(G : \pi, L') \geq \prod_{i=0}^{n-3} (k-2b-i)$.  Let $p = \prod_{i=0}^{n-3} (k-2b-i) - k^{n-2}$, and notice that $p/k^{n-3} \leq (k-2b) - k = -2b$.  Now, we have that 
$$P(\mathcal{G},k) - u_{\ell}(G,L) \leq \frac{1}{|\Aut(G)|}  \left(-k^{n-2} + ((a-b)+(m-1) +bp/k^{n-3})k^{n-3}   \right).$$
So, when $0 < s \leq b$, $k > 2b + n - 3$, and $k > (a-b) + (m - 1)-2b^2$, we have that $u_{\ell}(G,L) > P(\mathcal{G},k)$.

Since $s > 0$ if and only if $L$ doesn't assign the same list to every vertex of $G$, $u_{\ell}(G,k) = P(\mathcal{G},k)$ whenever $k > (a-b) + (m - 1)(b+1)$.
\end{proof}

A graph $G$ is called \emph{point-determining} if no two distinct vertices in $G$ have the same neighborhood in $G$ (see e.g.~\cite{GL11, S73}).  It is easy to see that $G$ is point-determining if and only if all transpositions in $\Aut(G)$ interchange two adjacent vertices. In order to complete the proof of Corollary~\ref{cor: pointdetermine}, we have to verify the conclusion for connected point-determining graphs with at most 3 edges. The only such graphs are $K_1, K_2, K_3, P_4$.  If $G$ is a copy of any of these graphs the following result applies.

\begin{pro} [\cite{KS90}] \label{pro: chordal}
If $G$ is a chordal graph, then $P_{\ell}(G,k) = P(G,k)$ whenever $k \in \N$.
\end{pro}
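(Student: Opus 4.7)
The plan is to exploit the structural characterization of chordal graphs by perfect elimination orderings. Recall that a graph $G$ is chordal if and only if its vertices admit an ordering $v_1, v_2, \ldots, v_n$ such that, for each $i \in [n]$, the set $N(v_i) \cap \{v_1, \ldots, v_{i-1}\}$ induces a clique in $G$. I would fix such an ordering and, for each $i$, let $d_i = |N(v_i) \cap \{v_1, \ldots, v_{i-1}\}|$.

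First, I would establish the standard identity $P(G,k) = \prod_{i=1}^{n}(k - d_i)$. Process the vertices in order $v_1, v_2, \ldots, v_n$, greedily selecting a color for each. When it is $v_i$'s turn, its $d_i$ earlier neighbors form a clique, and so in any proper $k$-coloring they have already received $d_i$ pairwise distinct colors; thus, there are exactly $k - d_i$ valid choices for $v_i$, independently of the previous choices. Every proper $k$-coloring is obtained uniquely in this way, giving the claimed product formula.

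Next, I would prove the matching lower bound $P(G,L) \geq \prod_{i=1}^{n}(k - d_i)$ for an arbitrary $k$-assignment $L$ by running the same process. When it is $v_i$'s turn, its already-colored neighbors form a clique and therefore occupy $d_i$ \emph{distinct} colors; hence at most $d_i$ elements of $L(v_i)$ are forbidden, leaving at least $|L(v_i)| - d_i = k - d_i$ valid choices regardless of what the sets $L(v_j)$ look like. Multiplying these choices yields $P(G,L) \geq \prod_{i=1}^{n}(k-d_i) = P(G,k)$. Taking the minimum over all $k$-assignments $L$ gives $P_\ell(G,k) \geq P(G,k)$, and combining with the trivial bound $P_\ell(G,k) \leq P(G,k)$ (obtained from the constant assignment $L(v) = [k]$) yields equality.

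There is no real obstacle here: the key observation is that the clique property supplied by the perfect elimination ordering makes the count of forbidden colors at $v_i$ exactly $d_i$ in both the uniform case and the list case, so the two counts agree term by term. The only care needed is to choose the vertex ordering before invoking the list assignment, so that the bookkeeping does not depend on $L$.
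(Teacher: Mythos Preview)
Your argument via a perfect elimination ordering is correct and is the standard proof of this fact. One very minor point: in the sentence ``leaving at least $|L(v_i)|-d_i=k-d_i$ valid choices'' you should really write $\max(0,k-d_i)$, since for $k<\omega(G)$ some $k-d_i$ may be nonpositive; but in that range $P(G,k)=0$ anyway, so the conclusion $P(G,L)\ge P(G,k)$ still holds trivially.

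As for comparison with the paper: the paper does not prove Proposition~\ref{pro: chordal} at all. It is quoted as a known result from~\cite{KS90} and used as a black box to handle the small cases $K_1,K_2,K_3,P_4$ in the proof of Corollary~\ref{cor: pointdetermine}. So there is nothing to compare your argument against; you have simply supplied the (standard) proof that the paper omits.
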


So, when $G$ is a copy of $K_1, K_2, K_3$, or $P_4$ it is easy to see that the hypotheses of Proposition~\ref{pro: oneaut} are satisfied.

\begin{customcor}  {\bf\ref{cor: pointdetermine}}
If $\mathcal{G}$ is an unlabeled, connected, point-determining graph, then there exists an $N \in \N$ such that $P_{\ell}(\mathcal{G},k) = P(\mathcal{G},k)$ whenever $k \geq N$.
\end{customcor}

We end the section with the proof of Theorem~\ref{thm: transposition}.

\begin{customthm} {\bf\ref{thm: transposition}}
Suppose $\mathcal{G}$ is an unlabeled graph of order $n$ with $n$ possibly zero. Suppose $G \in \mathcal{G}$.  Suppose that $G$ is point-determining and for each $\pi \in \Aut(G) - \{id\}$ if $C_1 \ldots C_s$ is the cycle decomposition of $\pi$, then there is an $i \in [s]$ such that $C_i$ contains at least two adjacent vertices. 

Let $G'$ be the graph obtained from $G$ by adding vertices $x$ and $y$, and edges so that $x$ and $y$ are adjacent to every element of $V(G)$.  Also, let $\mathcal{G}'$ be the unlabeled graph such that $G' \in \mathcal{G}'$.  Then, there exists an $N \in \N$ such that $P_{\ell}(\mathcal{G}',k) = P(\mathcal{G}',k)$ whenever $k \geq N$.
\end{customthm}

\begin{proof}
 First, notice that $|\Aut(G')| = 2|\Aut(G)|$.  Additionally, the hypotheses imply that the only elements of $\Aut(G')$ that have no cycle in their cyclic decomposition with two adjacent vertices in $G'$ are the identity permutation and the transposition interchanging $x$ and $y$.  So, if $G''$ is the graph obtained from $G'$ by identifying $x$ and $y$ as the same vertex, Theorem~\ref{thm: Hanlon} and Lemma~\ref{lem: reduce} imply that for each $k \geq 2$ 
 \begin{align*}
     P(\mathcal{G},k) &= \frac{1}{2|\Aut(G)|} \left(P(G',k) + P(G'',k) \right) \\ 
     &= \frac{1}{2|\Aut(G)|} \left(k(k-1)P(G,k-2) + 2P(G'',k) \right) \\
     &= \frac{1}{2|\Aut(G)|} \left(k(k-1)P(G,k-2) + 2kP(G,k-1) \right).
 \end{align*}
 Now, for some fixed $k \in \N$ satisfying $k \geq |E(G)|+3$ suppose that $L$ is a $k$-assignment for $G$.  Let $C = L(x) \cap L(y)$ and $c = |C|$.  Note that $0 \leq c \leq k$, and we know from Section~\ref{intro} that $P_{\ell}(G,q) = P(G,q)$ whenever $q \geq k-2$.

 Clearly, each proper $L$-coloring of $G'$ is equivalent to at most $2|\Aut(G)|$ proper $L$-colorings of $G'$. However, if $f$ is a proper $L$-coloring of $G'$ that colors $x$ and $y$ with the same color from $C$, then $f$ can be equivalent to at most $|\Aut(G)|$ proper $L$-colorings of $G'$.  This is because for each $\pi \in \Aut(G')$ that has both $x$ and $y$ as fixed points, $f \pi = f (xy) \pi$ where $(xy)$ is the transposition interchanging $x$ and $y$.

 Also, if $f$ is a proper $L$-coloring of $G'$ that doesn't color $x$ with an element from $C$, then $f$ can be equivalent to at most $|\Aut(G)|$ proper $L$-colorings.  This is because for each $\pi \in \Aut(G')$ containing the cycle $(xy)$, $f \pi$ is not a proper $L$-coloring of $G'$.  Similarly, if $f$ is a proper $L$-coloring of $G'$ that doesn't color $y$ with an element from $C$, then $f$ can be equivalent to at most $|\Aut(G)|$ proper $L$-colorings.  All these facts imply 
\begin{align*} u_{\ell}(G',L) &\geq \frac{c(c-1)P_{\ell}(G,k-2)}{2|\Aut(G)|} + \frac{cP_{\ell}(G,k-1)}{|\Aut(G)|} + \frac{(k^2 - c^2)P_{\ell}(G,k-2)}{|\Aut(G)|} \\
&= \frac{c(c-1)P(G,k-2)}{2|\Aut(G)|} + \frac{cP(G,k-1)}{|\Aut(G)|} + \frac{(k^2 - c^2)P(G,k-2)}{|\Aut(G)|}.
\end{align*}
So, we have that
$$P(\mathcal{G}',k) - u_{\ell}(G',L) \leq \frac{1}{2|\Aut(G)|}  \left((k-c) (2P(G,k-1) - (c+k+1) P(G,k-2))   \right).$$
Since 
$$\lim_{k \rightarrow \infty} \frac{P(G,k-1)}{P(G,k-2)} = 1,$$  there is an $N_G \in \N$ such that $P(G,k-1)/P(G,k-2) \leq (k+1)/2$ whenever $k \geq N_G$.  So, we have that if $k \geq N_G$, $P(\mathcal{G}',k) - u_{\ell}(G',L) \leq 0$.  It immediately follows that $P_{\ell}(G',k) = P(\mathcal{G}',k)$ whenever $k \geq \max\{N_G,|E(G)|+3\}$. 
\end{proof}

\vspace*{0.5cm}
{\bf Acknowledgement.} The authors thank Prof. Bruce Sagan for encouraging and helpful conversations.

\end{document}